\newtheorem{lemma}[equation]{Lemma}
\newtheorem{theorem}[equation]{Theorem}
\newtheorem*{theorem*}{Theorem}
\newtheorem{proposition}[equation]{Proposition}
\newtheorem{corollary}[equation]{Corollary}
\theoremstyle{definition}
\newtheorem{definition}[equation]{Definition}
\newtheorem{remark}[equation]{Remark}
\newtheorem{example}[equation]{Example}
\numberwithin{equation}{section}
\newcommand{\too}{\longrightarrow}
\newcommand{\mtoo}{\longmapsto}
\newcommand{\htoo}{\lhook\joinrel\longrightarrow}
\newcommand{\g}{\mathfrak{g}}
\renewcommand{\b}{\mathfrak{b}}
\renewcommand{\d}{\mathfrak{d}}
\renewcommand{\t}{\mathfrak{t}}
\renewcommand{\c}{\mathfrak{c}}
\renewcommand{\l}{\mathfrak{l}}
\newcommand{\n}{\mathfrak{n}}
\renewcommand{\c}{\mathfrak{c}}
\newcommand{\uu}{\mathfrak{u}}
\newcommand{\z}{\mathfrak{z}}
\newcommand{\nuU}{\overline{U}}
\newcommand{\nB}{\overline{B}}
\newcommand{\nb}{\overline{\b}}
\newcommand{\nuu}{\overline{\uu}}
\renewcommand{\O}{\mathcal{O}}
\renewcommand{\S}{\mathcal{S}}
\newcommand{\C}{\mathcal{C}}
\newcommand{\I}{\mathcal{I}}
\newcommand{\A}{\mathcal{A}}
\DeclareMathOperator{\Ad}{Ad}
\DeclareMathOperator{\ad}{ad}
\def\l@subsection{\@tocline{2}{0pt}{3.5pc}{5pc}{}}
\title{An analogue of Whittaker reduction for group-valued moment maps}
\author{Ana B\u{a}libanu}
\address{Department of Mathematics, Louisiana State University, Baton Rouge, LA  70803, USA}
\email{ana@math.lsu.edu}
\date{}
\begin{document}
\maketitle

\begin{abstract}
We construct an analogue of Whittaker reduction for Poisson actions of a semisimple complex Poisson--Lie group $G$. The reduction takes place along a class of transversal slices to unipotent orbits in $G$, which are generalizations of the Steinberg cross-section and are indexed by conjugacy classes in the Weyl group. We give an interpretation of these reductions in the framework of Dirac geometry, and we use this to describe their symplectic leaves.
\end{abstract}

\tableofcontents

%
%
%
%
%
%
%
\section*{Introduction}
Any semisimple complex Lie algebra $\g$ has a canonical Kirillov--Kostant Poisson structure, whose symplectic leaves are the orbits of the adjoint action. Each nilpotent element $f$ of $\g$ determines a Slodowy slice $\S$, introduced by Kostant \cite{kos:59} and Slodowy \cite{slo:80}, which is transverse to the adjoint orbits and strictly transverse to the nilpotent orbit containing $f$. Work of Gan and Ginzburg \cite{gan.gin:02} shows that the Kirillov--Kostant symplectic form on any adjoint orbit $\O$ restricts to a symplectic form on the intersection $\S\cap\O$, and the resulting symplectic foliation induces on $\S$ a natural Poisson structure.

Let $G$ be a semisimple complex group integrating $\g$, and let $M$ be a complex Poisson variety on which $G$ acts by Poisson diffeomorphisms. This action is called Hamiltonian if it is induced by a moment map
\[\mu:M\too \g.\]
Whittaker reduction is a type of Hamiltonian reduction, first defined by Kostant \cite{kos:79}, that takes place along $\mu$ at the nilpotent element $f$. It can be realized either as a symplectic reduction of $M$ with respect to the action of a maximal unipotent subgroup $U$ opposite to $f$, or as the preimage under $\mu$ of the Poisson transversal $\S$. These two constructions fit into the diagram
\begin{equation*}
\begin{tikzcd}[row sep=huge]
\mu^{-1}(\S)\arrow[r,hook]\arrow[rd,swap,"\sim"]	&\mu_U^{-1}(f)\arrow[r,hook]\arrow[d]	&M		\\
						&\mu_U^{-1}(f)/U,					&		
\end{tikzcd}
\end{equation*}
where $\mu_U$ is the moment map of the $U$-action, $f$ is viewed as an element of $\uu^*$ using the Killing form, and the diagonal map is an isomorphism. From this perspective, Whittaker reduction encodes the Poisson geometry of $M$ in the direction transversal to the orbits of $G$. 

\subsection*{Moment maps for Poisson actions} Poisson manifolds frequently exhibit natural symmetries that fail to preserve the Poisson bracket. It was observed by Semenov-Tian-Shansky \cite{sts:85} that this phenomenon is often explained by the fact that the group of symmetries itself carries an additional Poisson structure. In other words, the group $G$ which acts on the Poisson manifold $M$ is a Poisson–Lie group in the sense of Drinfeld \cite{dri:83}, and the action map is a Poisson map.

A Hamiltonian theory for such actions was developed by Lu \cite{lu:89}, and the associated moment maps take values in the dual Poisson–Lie group $G^*$. In the case when $G$ is a semisimple complex group equipped with the standard Poisson--Lie group structure, there is a local embedding 
\[G^*\cong B\times_T\nB\too G\]
whose image is the open dense Bruhat cell, and Lu's definition can be extended to consider moment maps valued in the group $G$ itself. This is a special case of the theory of $D/G$-valued moment maps introduced by Bursztyn and Crainic \cite{bur.cra:09}.

\subsection*{Slices to unipotent orbits} The multiplicative analogue of the Kostant slice was constructed by Steinberg \cite{ste:65} and is given by $U_ww,$ where $w$ is a minimal-length Coxeter element in the Weyl group of $G$ and $U_w$ is the subgroup generated by positive roots which are flipped by $w^{-1}$. This slice consists entirely of regular elements and is strictly transverse to the regular conjugacy classes in $G$. In the special case of $SL_n(\mathbb{C})$, it is precisely the space of Frobenius companion matrices. 

Steinberg's construction indicates that transverse slices to unipotent orbits in $G$ are linked to conjugacy classes in the Weyl group. Slices associated to non-Coxeter conjugacy classes have been studied by He and Lusztig \cite{lus.he:12}, by Sevostyanov \cite{sev:11}, and most recently by Duan \cite{dua:23}. While there are subtle technical differences between these constructions, they share two fundamental features. First, they associate to an element $w$ of the Weyl group a slice
\[\Sigma\coloneqq U_wZw,\]
where $U_w$ is defined as above, and $Z$ is the reductive subgroup of $G$ generated by $T^w$ and by the roots fixed by $w$. Second, each slice has the property that the conjugation map gives an isomorphism
\[U\times \Sigma\too UZwU\eqqcolon\Omega,\]
where $U$ is the unipotent subgroup consisting of positive roots not fixed by $w$. This isomorphism implies, in particular, that $\Sigma$ is transverse to the conjugacy classes of $G$.

The work of Sevostyanov shows that the slices $\Sigma$ have a natural Poisson structure inherited from the Poisson structure on $G$. More recently, the work of Duan makes the connection to unipotent orbits precise by showing that, when the conjugacy class of $w$ is ``most elliptic,'' the slice $\Sigma$ is strictly transverse to the unipotent orbit associated to $w$ under the Lusztig map \cite{lus:11, lus:12a, lus:12b}. Therefore, the slices $\Sigma$ exhibit multiplicative analogues of the key geometric features of Slodowy slices.

\subsection*{Summary of results} In the present work, we develop an analogue of Whittaker reduction for Poisson actions of Poisson--Lie groups, which takes place along the slices $\Sigma$. Concretely, equip the semisimple complex group $G$ with the standard Poisson--Lie group structure, and suppose that it has a Hamiltonian Poisson action on a complex manifold $M$ with corresponding moment map $\mu:M\too G$. There is a commutative diagram
\begin{equation*}
\begin{tikzcd}[row sep=huge]
\mu^{-1}(\Sigma)\arrow[r, hook]\arrow[rd,swap, "\sim"]	&\mu^{-1}(\Omega)\arrow[r, hook, "\jmath"]\arrow[d, "q"]	&M		\\
						&Q\coloneqq\mu^{-1}(\Omega)/U					&		
\end{tikzcd}
\end{equation*}
in which the diagonal map is an isomorphism. We prove the following main result as Theorem \ref{main}.

\begin{theorem*}
Let $\c$ be the orthogonal complement of the fixed-point set $\t^w$ in the maximal Cartan $\t$ of the Lie algebra $\g$. The quotient $Q$ carries a natural Poisson bracket $\{\cdot,\cdot\}_Q$ which is uniquely characterized by the property that
\[q^*\{f,g\}_Q=\jmath^*\{F, G\}\]
for all functions $f,g\in \O_Q$ and all $\c$-invariant lifts $F,G\in\O_M$ that satisfy $q^*f=\jmath^*F$ and $q^*g=\jmath^*G$.
\end{theorem*}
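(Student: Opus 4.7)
I would break the proof into three stages: the manifold structure of $Q$, the construction of $\{\cdot,\cdot\}_Q$ via Dirac pullback, and the verification of the pullback characterization.

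Since the multiplication map $U\times\Sigma\to\Omega$ is an isomorphism, the projection $\Omega\to\Sigma$ is a trivial $U$-bundle. The Hamiltonian hypothesis on the action makes $\mu$ transverse to $\Omega$, so $\mu^{-1}(\Omega)\to\mu^{-1}(\Sigma)$ is a principal $U$-bundle; this simultaneously gives the quotient manifold structure on $Q$ and the diagonal isomorphism $\mu^{-1}(\Sigma)\xrightarrow{\sim} Q$ appearing in the diagram.

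For the Poisson structure, I would use the Dirac-geometric framework announced in the abstract. Viewed with its Cartan--Dirac structure, $G$ receives $\mu:M\to G$ as a strong Dirac morphism, and the slice $\Sigma$ is a Poisson transversal in $G$, as established by Sevostyanov and geometrically refined by Duan. The Bursztyn--Crainic pullback then endows $\mu^{-1}(\Sigma)$ with a canonical Poisson bracket, which we transport to $Q$ through the diagonal isomorphism. This yields $\{\cdot,\cdot\}_Q$ intrinsically, independently of any lift. To verify the characterization, let $f,g\in\O_Q$ and choose $\c$-invariant lifts $F,G\in\O_M$. Local existence of such lifts follows by averaging an arbitrary lift along the infinitesimal $\c$-flow; the relevance of $\c$ comes from the decomposition $\t=\t^w\oplus\c$, where $\t^w$ is already tangent to $\Sigma$ (being contained in the Lie algebra of $Z$) while $\c$ captures the remaining Cartan directions. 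I would then compute $\jmath^*\{F,G\}$ using the moment map equation and the decomposition $\g=\n\oplus\z\oplus\nuu$ adapted to $w$, showing that it is $U$-invariant, independent of the choice of lifts, and descends along $q$ to the Dirac-pullback bracket. Uniqueness of $\{\cdot,\cdot\}_Q$ with the stated property is then automatic, and the Jacobi identity is inherited from that on $M$.

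The main obstacle is the reconciliation in the last step. The bracket $\{F,G\}$ on $M$ is computed using the full Poisson bivector $\pi_M$, whereas $\{\cdot,\cdot\}_Q$ arises from a Dirac projection of $\pi_M$ along the transversal $\Sigma$; matching them at $\mu^{-1}(\Sigma)$ requires isolating two distinct cancellations. Terms in $\jmath^*\{F,G\}$ coming from the $U$- and $\nuu$-directions vanish via the moment map condition, while the terms coming from the $\c$-directions vanish precisely by the $\c$-invariance of the lifts. Verifying that these two families of vanishing terms exhaust the discrepancy between $\pi_M$ and its Dirac pullback along $\Sigma$ is the central technical task; it is here that the decomposition $\t=\t^w\oplus\c$, and thus the hypothesis of $\c$-invariance of the lifts, is forced by the geometry.
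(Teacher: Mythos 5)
Your proposal founders at stage two. You build the bracket by declaring $\Sigma$ a Poisson transversal in $G$ and pulling back along $\mu$; but $\Sigma$ is \emph{not} a Poisson transversal for the relevant structure $\pi_0$ on $G\cong D/G_\Delta$, and the paper never claims it is --- it only establishes the much weaker statement that the open subset $\Sigma^\circ$ is a clean Poisson--Dirac submanifold, and even that is deduced \emph{from} Theorem \ref{main}, not used to prove it. If $\Sigma$ were a Poisson transversal, the theorem would follow at once from the Frejlich--M\u{a}rcu\c{t} machinery exactly as in the classical Whittaker case of Section \ref{first}, and no invariance condition on the lifts would appear in the statement. The presence of the $\c$-invariance hypothesis is the signal that this route is closed: the computation in Proposition \ref{sharp} shows that $\pi_0^\#(T_\Omega^\circ)$ is tangent to the orbits of $UC$, so it sticks out of $T_\Omega$ precisely in the $\c$-directions, $\Omega$ is not coisotropic, and the reduction must be performed as a pre-Poisson--type reduction. (Separately, the ``Cartan--Dirac structure'' and ``strong Dirac morphism'' belong to the quasi-Hamiltonian framework of Alekseev--Bursztyn--Meinrenken; the moment maps here are Lu/Bursztyn--Crainic $D/G$-valued maps into $(G,\pi_0)$, a different structure on $G$.)

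The paper's actual argument verifies the hypotheses of a ring-theoretic reduction statement (Proposition \ref{reduction}): using the moment map condition and Lemma \ref{forms} one identifies $\pi^\#\bigl(T_{\mu^{-1}(\Omega)}^\circ\bigr)$ inside $\rho_M(\uu+\c)$; Corollary \ref{char} shows its intersection with $T_{\mu^{-1}(\Omega)}$ lies in $\rho_M(\uu)$, so the characteristic distribution is tangent to the $U$-orbits; and local freeness of the $C$-action on $\Omega$ (not averaging over the noncompact flow of $\c$, which does not make sense here) supplies the $\rho_M(\uu+\c)$-invariant extensions. Taking $U$-invariants of $N(\I)/(N(\I)\cap\I)\subset\O_S$ then produces $\O_Q$ with the stated bracket. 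Your stage three does intuit the correct division of labor --- $\uu$-directions handled by the moment map condition, $\c$-directions by the invariance of the lifts (the $\nuu$-directions you mention do not actually occur) --- but without the explicit computation of $\pi_0^\#(T_\Omega^\circ)$ and a substitute for Proposition \ref{reduction}, the bracket you are trying to match against has not been constructed.
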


In the special case when $M$ is the group $G$ itself, the resulting Poisson structure on $\Sigma$ agrees with the Poisson structure originally introduced by Sevostyanov. Under the diagonal isomorphism, this Poisson structure can also be viewed as a pullback of the Poisson structure on $M$ in the sense of Dirac geometry. This allows us to characterize in Theorem \ref{main2} and Corollary \ref{main2cor} the symplectic leaves of a distinguished open dense subset $\Sigma^\circ$.

\begin{theorem*}
The symplectic leaves of
\[\mu^{-1}(\Sigma^\circ)\cong\mu^{-1}(\Omega)/U\]
are precisely the reductions of the symplectic leaves of $M$. In particular, $\mu^{-1}(\Sigma^\circ)$ is a clean Poisson--Dirac submanifold of $M$.
\end{theorem*}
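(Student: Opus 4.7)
My approach leans on the Dirac-geometric description of $\{\cdot,\cdot\}_Q$ indicated just before the theorem: under the diagonal isomorphism $\mu^{-1}(\Sigma^\circ)\cong Q$, the induced Poisson bracket agrees with the backward pullback Dirac structure on $\mu^{-1}(\Sigma^\circ)\subset M$ along the inclusion. Granting this, the theorem reduces to showing that the pullback Dirac structure on $\mu^{-1}(\Sigma^\circ)$ is genuinely Poisson (has trivial kernel) and that its presymplectic leaves are the connected components of the intersections with the symplectic leaves of $M$ --- i.e.\ the cleanness condition. The identification with reductions of ambient leaves is then formal: translating a clean intersection $L\cap\mu^{-1}(\Sigma^\circ)$ across the diagonal isomorphism yields $\mu|_L^{-1}(\Omega)/U$, which is the Whittaker-type symplectic reduction of $L$ by $U$.

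The technical heart is a pointwise check at each $x\in\mu^{-1}(\Sigma^\circ)$ lying on a symplectic leaf $L\subset M$. One must verify that $L\cap\mu^{-1}(\Sigma^\circ)$ is a submanifold near $x$ with tangent space $T_xL\cap T_x\mu^{-1}(\Sigma^\circ)$, and that the restriction of the leaf symplectic form to this tangent space is nondegenerate. For the first (cleanness of the intersection), the plan is to transport the Steinberg-type transversality
\[U\times\Sigma\too\Omega\]
from $G$ to $M$ through the moment map. Because $\mu$ is Poisson, its restriction to $L$ is an infinitesimal submersion onto the dressing orbit (conjugacy class) $\C$ through $\mu(x)$, and so the known transversality of $\Sigma$ to $\C$ in $G$ should pull back to transversality of $\mu^{-1}(\Sigma)$ to $L$ in $M$. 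A dimension count then yields the tangential equality, and the $U$-saturation behaves compatibly.

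The hard part will be the nondegeneracy. A vector in the kernel of the restricted leaf form on $T_xL\cap T_x\mu^{-1}(\Sigma^\circ)$ is a tangent direction along $L$ that annihilates every other such direction in the intersection. Using the characterization of $\{\cdot,\cdot\}_Q$ via $\c$-invariant lifts in Theorem \ref{main}, such a kernel vector must arise from an infinitesimal $\c$-symmetry that fails to be recovered generically along $\Sigma$. The role of the open subset $\Sigma^\circ$ is presumably precisely to cut out the locus where the $\c$-action on $\Sigma$ is free --- a regular stratum for the centralizer of $w$ --- so that the kernel collapses. Verifying this cleanly will require unpacking the Lie-theoretic construction of $\Sigma^\circ$ and matching the moment-map coupling of the $\c$-action on $M$ against the $\c$-invariance condition in the definition of $\{\cdot,\cdot\}_Q$.

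Once both properties hold at every point, the symplectic leaves of $\mu^{-1}(\Sigma^\circ)$ are the connected clean intersections with the leaves of $M$ with their restricted forms, and $\mu^{-1}(\Sigma^\circ)$ is by definition a clean Poisson--Dirac submanifold of $M$; the diagonal isomorphism then delivers the stated identification with Whittaker reductions of the ambient symplectic leaves.
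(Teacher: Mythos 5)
Your overall architecture --- realize the reduced bracket as the pullback Dirac structure $\imath^!\jmath^!L_\pi$ on $\mu^{-1}(\Sigma^\circ)$, show it is Poisson and clean, and then translate leaves across the diagonal isomorphism --- matches the paper. But the step you yourself flag as ``the hard part,'' the triviality of the kernel of the pullback Dirac structure, is exactly where your plan has a gap, and the route you sketch for filling it is misdirected. You propose a pointwise nondegeneracy computation in which a kernel vector would come from an infinitesimal $\c$-symmetry, killed because ``$\Sigma^\circ$ is the locus where the $\c$-action is free.'' That is not what $\Sigma^\circ$ is: the $C$-action is already locally free on all of $\Omega$ (this is used in the proof of Theorem \ref{main}), and $\Sigma^\circ=UZ^\circ w$ is instead obtained by replacing $Z$ with its open Bruhat cell, so that $\Sigma^\circ$ lies in a single $B\times_T\nB$-orbit. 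The paper never performs a pointwise kernel computation at all: the Poisson property of $\imath^!\jmath^!L_\pi$ comes for free from Theorem \ref{main}. Since $Q^\circ$ already carries a genuine Poisson structure, which in Dirac terms is $q_!\jmath^!L_\pi$, and since the diagonal map $\mu^{-1}(\Sigma^\circ)\to Q^\circ$ is an isomorphism equal to the composite of the b-Dirac inclusion with the f-Dirac quotient $q$, it is a Dirac diffeomorphism; hence $\imath^!\jmath^!L_\pi$ is isomorphic to a Poisson structure and has trivial kernel. This transport of the already-constructed algebraic reduction across a Dirac diffeomorphism is the idea missing from your proposal.

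The actual role of $\Sigma^\circ$ appears in the cleanness step, where your sketch is also too coarse. The symplectic leaves of $(G,\pi_0)$ are not conjugacy classes but intersections $\O_\Delta\cap\O$ of conjugacy classes with $B\times_T\nB$-orbits, so transversality of $\Sigma$ to conjugacy classes alone does not give clean intersection with leaves. One needs that $\Sigma^\circ$ is contained in a single $B\times_T\nB$-orbit $\O$, whence $T_{\Sigma^\circ}\cap T_{\O_\Delta\cap\O}=T_{\Sigma^\circ}\cap T_{\O_\Delta}=T_{\Sigma^\circ\cap\O_\Delta\cap\O}$; this is then pulled back through $\mu$ using the moment-map identity $\mu_*(T_xS)\supseteq\pi_0^\#(T^*_G)$ for a leaf $S$ of $M$. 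The final identification of the leaves of the quotient with reductions of ambient leaves goes essentially as you say, via the clean Poisson--Dirac property, but as written your plan cannot be completed without repairing the two points above.
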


\subsection*{Structure of the paper} In Section \ref{first} we review Slodowy slices and the definition of Whittaker reduction, and in Section \ref{second} we recall the theory of moment maps for Poisson actions of Poisson Lie--groups. In Section \ref{third} we survey constructions of transversal slices to the conjugation action, and we prove the main theorem as Theorem \ref{main}. Finally, in Section \ref{fourth} we interpret this theorem in the language of Dirac structures, and we use this to describe the symplectic leaves of the reduction.

\subsection*{Acknowledgments} The author would like to thank Chengze Duan, Sam Evens, Pedro Frejlich, Jiang-Hua Lu, and Alexey Sevastyanov for many interesting discussions. This work was partially supported by an NSF Standard Grant under award number DMS--2401514.

%
%
%
%
%
%
%
\section{Review of Whittaker reduction}
\label{first}
Let $G$ be a semisimple complex group and with Lie algebra $\g$. The Killing form $\kappa(\cdot,\cdot)$ is nondegenerate on $\g$, and gives a canonical identification between $\g$ and its dual $\g^*$. Pulling back the Kirillov--Kostant Poisson structure through this isomorphism, we obtain a Poisson structure on $\g$ whose symplectic leaves are the adjoint orbits. In this section we recall two equivalent definitions of Whittaker reduction, a type of Hamiltonian reduction that uses a class of affine slices which are transverse to the adjoint action.

\subsection{Slodowy slices}
Fix a nilpotent element $f$ of $\g$. By the Jacobson--Morozov theorem [Theorem 3.7.1]\cite{chr.gin:97}, it can be completed to an $\mathfrak{sl}_2$-triple $\{e,h,f\}$ which is unique up to the adjoint action of the centralizer $G^f$. The associated \emph{Slodowy slice} \cite{slo:80} is the affine space
\[\S\coloneqq f+\g^e,\]
which is strictly transverse to the adjoint orbit of the unipotent element $f$. Moreover, $\S$ intersects each adjoint orbit that it meets transversally and symplectically---that is, the restriction of the Kirillov--Kostant symplectic structure on any adjoint orbit $\O$ to the intersection $\S\cap\O$ is a symplectic form \cite[3.1]{gan.gin:02}. It follows that $\S$ is a Poisson transversal in $\g$ \cite{fre.mar:17}, and there is a unique Poisson structure on $\S$ whose symplectic leaves are the connected components of the intersections of $\S$ with the adjoint orbits.

The adjoint action of the semisimple element $h$ induces an eigenspace decomposition
\[\g=\bigoplus_{i\in\mathbb{Z}}\g_{(k)},\qquad\g_{(k)}\coloneqq\left\{x\in\g\mid [h,x]=kx\right\}\]
and the pairing
\[(x,y)\mtoo \kappa(f,[x,y]),\]
is nondegenerate on $\g_{(1)}$. If $\ell$ is a maximal isotropic subspace of $\g_{(1)}$ with respect to this pairing, the direct sum 
\[\n=\ell+\sum_{k\geq 2}\g_{(k)}\]
is a nilpotent subalgebra, integrated by a unipotent subgroup $N$ of $G$. The adjoint action defines an isomorphism
\begin{align}
\label{transv}
N\times \S &\xlongrightarrow{\sim} f+\n^\perp\\
(n,x)&\mtoo \Ad_n(x),\nonumber
\end{align}
where $\n^\perp$ is the annihilator of $\n$ under the Killing form \cite[Lemma 2.1]{gan.gin:02}. In other words, $\S$ is a cross-section to the adjoint action of $N$ on the affine space $f+\n^\perp$.

When all the eigenvalues of $h$ are even, the space $\n^\perp$ is the parabolic subalgebra spanned by the non-negative eigenspaces of $h$, and $\n$ is its nilradical. This occurs in particular when $f$ is regular---then $h$ and $e$ are also regular, the parabolic $\n^\perp$ is a Borel subalgebra, and the corresponding Slodowy slice is the Kostant slice \cite{kos:59}, which consists entirely of regular elements and meets every regular adjoint orbit in $\g$ in exactly one point.

%
%
%
%
%
%
%
\subsection{Whittaker reduction}
Suppose that $G$ has a Hamiltonian action on a Poisson manifold $M$, and that $\mu:M\too\g^*$ is the corresponding moment map. The restriction of the $G$-action to the subgroup $N$ is also Hamiltonian, and we get a commutative diagram of moment maps
\begin{equation*}
\begin{tikzcd}[column sep=large, row sep=huge]
M	\arrow[rd,swap,"\nu"]\arrow[r,"\mu"]	&\g^*\cong\g	\arrow[d, shift left=4.5]		\\
								&\,\quad\n^*\cong\g/\n^\perp		
\end{tikzcd}
\end{equation*}
where the identifications in the right column are made using the Killing form. The coset $[f]$ in $\g/\n^\perp$ is fixed by the action of $N$, and the \emph{Whittaker reduction} of $M$ with respect to $f$ is the Marsden--Weinstein reduction \cite{mar.wei:74} of $M$ at this fixed point. In other words, it is the quotient
\[\nu^{-1}([f])/N=\mu^{-1}(f+\n^\perp)/N,\]
equipped with the Marsden--Weinstein reduced Poisson structure. 

Since $\S$ is a Poisson transversal in $\g$, the preimage $\mu^{-1}(\S)$ is also a Poisson transversal in $M$ \cite[Lemma 7]{fre.mar:17}. In particular, it is a smooth submanifold which inherits a Poisson structure from $M$, and it sits in the commutative diagram
\begin{equation*}
\begin{tikzcd}[row sep=huge]
\mu^{-1}(\S)\arrow[r,hook]\arrow[rd,swap,"\sim"]	&\mu^{-1}(f+\n^\perp)\arrow[r,hook]\arrow[d]	&M		\\
						&\mu^{-1}(f+\n^\perp)/N.					&		
\end{tikzcd}
\end{equation*}
The diagonal map is an isomorphism by the transversality theorem \eqref{transv}, and it is Poisson because it is a composition of backward-Dirac maps. (See also Section \ref{fourth} for an overview of Dirac geometry.) Therefore the Whittaker reduction of $M$ at $f$ is isomorphic, as a Poisson manifold, to the preimage of the Slodowy slice $\S$ under the moment map.

%
%
%
%
%
%
%
\section{Poisson actions of Poisson--Lie groups}
\label{second}
We recall some general background on Poisson actions of Poisson--Lie groups in Sections \ref{2.1}, \ref{2.2}, and \ref{2.3}, and we refer to \cite{lu:90} and to the Appendix of \cite{eve.lu:07} for a more detailed exposition. In Section \ref{2.4} we discuss moment maps for Poisson actions, and we show that in the case of complex semisimple groups the usual definition of moment map admits a slight extension. Then, in Section \ref{2.5} we recall the basics of Poisson reduction and we prove a particular reduction procedure in the case of Poisson actions of Poisson Lie groups. 

\subsection{Poisson--Lie groups}
\label{2.1}
Let $G$ be any real or complex Lie group. A Poisson structure $\pi_G$ on $G$ is \emph{multiplicative} if the group multiplication
\[(G,\pi_G)\times (G,\pi_G)\too (G,\pi_G)\]
is a Poisson map, and in this case $(G,\pi_G)$ is called a \emph{Poisson--Lie group}. Because the multiplicative bivector $\pi_G$ vanishes at the identity, its differential induces a Lie cobracket
\[\delta:\g\too\g\wedge\g\]
that satisfies the cocycle condition
\[\delta\left([x,y]\right)=\ad_x\delta(y)-\ad_y\delta(x).\]
In particular, the cocycle condition implies that the dual
\[\delta^*:\g^*\wedge\g^*\too\g^*\]
defines a Lie bracket on $\g^*$. For this reason, the triples $(\g,[\cdot,\cdot],\delta)$ and $(\g^*,\delta^*,[\cdot,\cdot]^*)$ are called \emph{Lie bialgebras}. 

Conversely, a theorem of Drinfeld \cite{dri:83} shows that the simply-connected group integrating any Lie bialgebra is a Poisson--Lie group, with Poisson structure induced by the cobracket $\delta$. In particular, the simply-connected group $G^*$ integrating the Lie algebra $\g^*$ is equipped with a multiplicative Poisson structure $\pi_{G^*}$, and is called the \emph{dual Poisson--Lie group} of $(G, \pi_G)$.

\begin{remark}
\label{integration}
When $G^*$ is an integration of $\g^*$ which is not simply-connected, the cobracket $\delta$ may fail to integrate to a Poisson structure on $G^*$. However, this integration is guaranteed, for instance, whenever $\delta$ is given by an $r$-matrix, which is the case whenever $\g$ is reductive \cite{sts:83}. We will often use this implicitly in what follows.
\end{remark} 

Whenever $(\g,[\cdot,\cdot],\delta)$ is a Lie bialgebra, there is a unique Lie bracket on the vector space $\g\times\g^*$ with the property that 
\begin{itemize}[topsep=2.5pt, itemsep=2.5pt]
\item $\g$ and $\g^*$ are Lie subalgebras, and
\item the natural inner product
\[\langle (x,\alpha),(y,\beta)\rangle=\alpha(x)+\beta(y)\]
is invariant under the adjoint action of $\d$.
\end{itemize}
The product $\g\times\g^*$ equipped with this Lie bracket is called the \emph{double Lie algebra} of $(\g,[\cdot,\cdot],\delta)$ and is denoted
\[\d\coloneqq \g\bowtie\g^*.\]

\begin{example}
\label{additive}
The simplest example of a multiplicative Poisson structure on $G$ is the trivial structure $\pi_G=0$. In this case the induced Lie bracket on $\g^*$ is trivial, and the dual Poisson--Lie group is $\g^*$ itself, viewed as a group under addition and equipped with the usual Kostant--Kirillov Poisson structure. The double Lie algebra in this case is the semi-direct product $\g\ltimes\g^*$. 
\end{example}

The triple $(\d,\g,\g^*)$ is an example of a \emph{Manin triple}---a collection $(\d,\g,\g')$ consisting of 
\begin{itemize}[topsep=2.5pt, itemsep=2.5pt]
\item a Lie algebra $\d$ with an $\ad$-invariant inner product $\langle\cdot,\cdot\rangle$, and 
\item maximal isotropic Lie subalgebras $\g$ and $\g'$ that intersect trivially. 
\end{itemize}
The inner product then gives a natural identification of $\g'$ with the dual of $\g$, and the Lie bracket on $\g'$ gives $\g$ the structure of a Lie bialgebra.

\begin{example}
\label{standard}
Let $\g$ be a complex semisimple Lie algebra with Killing form $\kappa(\cdot,\cdot)$, and fix a pair $\b$ and $\nb$ of Borel subalgebras whose intersection is a Cartan subalgebra $\t$. Let $\uu$ and $\nuu$ be their respective nilradicals, and denote by $B$, $\nB$, and $T$ the corresponding subgroups of $G$.

The Lie algebra $\g\oplus\g$, whose Lie bracket is defined by letting the two $\g$-summands commute, has the natural invariant inner product
\[\langle (x_1,y_1),(x_2,y_2)\rangle=\kappa(x_1,x_2)-\kappa(y_1,y_2).\]
With respect to this inner product, the subalgebras
\[\g_\Delta\coloneqq\left\{(x,x)\in\d\mid x\in\g\right\}\]
and
\[\b\times_{\t}\nb\coloneqq\left\{(u+h,\bar{u}-h)\in\b\times\nb\mid u\in\uu,\, \bar{u}\in\nuu, \,h\in\t\right\}\]
are both Lagrangian, and their intersection is trivial. Because the Manin triple $(\g\oplus\g,\g_\Delta,\b\times_\t\nb)$ gives $\g_\Delta$ the structure of a Lie bialgebras, it induces a Poisson structure on $G$ which is called the \emph{standard Poisson--Lie group structure}. In view of Remark \ref{integration}, the corresponding dual Poisson--Lie group is the group 
\[G^*\cong B\times_T \nB\]
consisting of pairs of elements in $B$ and $\nB$ whose components in the maximal torus are inverse pairs.
\end{example}

%
%
%
%
%
%
%
\subsection{Poisson structures on the double}
\label{2.2}
Let $\{x_i\}$ be a basis of $\g$, let $\{\xi_i\}$ be the dual basis of $\g^*$, and consider the $2$-tensor
\[\Lambda\coloneqq \sum x_i\wedge\xi_i\in\wedge^2\d.\]
The bivector field
\[\pi_D^-\coloneqq\Lambda^R-\Lambda^L\]
is a multiplicative Poisson structure on the simply-connected group $D$ integrating the Lie algebra $\d$. This structure is the Poisson--Lie group structure corresponding to the natural isomorphism between $\d^*$ and $\d$ obtained by flipping the two factors, and the Poisson--Lie group $(D,\pi_D^-)$ is known as the \emph{Drinfeld double}. Both $(G,\pi_G)$ and $(G^*,\pi_{G^*})$ are Poisson--Lie subgroups of $(D,\pi_D^-)$.

The bivector field
\[\pi_D^+\coloneqq\Lambda^R+\Lambda^L\]
also defines a Poisson structure on $D$, known as the \emph{Semenov-Tian-Shansky} (\emph{STS}) \emph{Poisson structure}. The space $(D,\pi_D^+)$ is called the \emph{Heisenberg double}, and its symplectic leaves are the connected components of intersections of $(G,G^*)$ and $(G^*,G)$ double cosets \cite[Lemma 5.3]{eve.lu:07}. In particular, the Heisenberg double has an open dense symplectic leaf. In view of the following example, this space can be viewed as a generalization of the cotangent bundle $T^*_G$. 

\begin{example}
When $G$ carries the trivial Poisson--Lie group structure $\pi_G=0$ as in Example \ref{additive}, the double Lie algebra $\d=\g\ltimes\g^*$ is integrated by the group $D=G\ltimes\g^*$. In this case the STS Poisson structure $\pi_D^+$ on $G\ltimes\g^*$ agrees under the left-trivialization isomorphism
\[T^*_G\cong G\ltimes\g^*\]%
with the canonical symplectic structure on the cotangent bundle $T^*_G$.
\end{example}

Since the pushforward of $\Lambda^L$ under the quotient map
\[D\too D/G\]
is trivial, the two Poisson structures $\pi_D^\pm$ both descend to the same well-defined Poisson structure $\pi_0$ on the quotient $D/G$. The symplectic leaves of $\pi_0$ are the intersections of the orbits of the left actions of $G$ and $G^*$. Moreover, since the composition
\[\g^*\htoo\d\too\d/\g\]
is an isomorphism, it integrates to a local diffeomorphism
\[(G^*,\pi_{G^*})\too (D/G,\pi_0)\]
which is a Poisson map.

\begin{example} 
When $\pi_G=0$, the induced Poisson structure $\pi_0$ on the quotient
\[D/G\cong\g^*\]
is precisely the usual Kirillov--Kostant Poisson structure, and the map $\eqref{local}$ is an isomorphism.
\end{example}

\begin{example}
When $G$ is a semisimple complex group equipped with the standard Poisson--Lie group structure introduced in Example \ref{standard}, there is a canonical isomorphism
\begin{align}
\label{diagonal}
D/G_\Delta&\xlongrightarrow{\sim} G\\
(g_1,g_2)&\mtoo g_1g_2^{-1},\nonumber
\end{align}
This induces a local diffeomorphism
\[B\times_T\nB\too  G\]
whose image is the open dense Bruhat cell $B\nB$ of $G$. The symplectic leaves of $(G,\pi_0)$ are the intersections of conjugacy classes with orbits of $B\times_T\nB$ \cite[Proposition 2.9]{eve.lu:07}, which are all \cite[Lemma 2.10]{eve.lu:07} of the form
\[(B\times_T\nB)\cdot tw\qquad\text{for some}\quad t\in T \text{ and } w\in W.\]
In particular, each conjugacy class of $G$ contains a unique open dense symplectic leaf.
\end{example}

%
%
%
%
%
%
%
\subsection{Poisson actions}
\label{2.3}
An action of the Poisson--Lie group $(G,\pi_G)$ on a Poisson manifold $(M,\pi)$ is called \emph{Poisson} if the action map
\[(G,\pi_G)\times (M,\pi)\too (M,\pi)\]
is a Poisson map. When $\pi_G$ is $0$, this means that $G$ acts by Poisson automorphisms; however, when $\pi_G$ is nonzero, Poisson actions of $G$ generally do not preserve the Poisson structure $\pi$ of $M$.

Left and right multiplication define two Poisson actions the Poisson--Lie group $(D,\pi_D^-)$ on the Poisson manifold $(D,\pi_D^+)$. Moreover, the left multiplication map descends to a Poisson action of $(D,\pi_D^-)$ on the quotient $(D/G,\pi_0)$. Since $(G,\pi_G)$ is a Poisson--Lie subgroup of $(D,\pi_D^-)$, we therefore obtain an induced Poisson action of $(G,\pi_G)$ on $(D/G,\pi_0)$, which is called the \emph{dressing action}.

Under the local diffeomorphism \eqref{local}, the dressing action corresponds to an infinitesimal action of $\g$ on the dual Poisson--Lie group $G^*$ by \emph{dressing transformations}. These are defined by the map
\begin{align*}
\g&\too\mathfrak{X}(G^*) \\
\xi&\mtoo \pi_{G^*}^\#(\xi^L),
\end{align*}
where $\xi^L$ is the left-invariant 1-form on $G^*$ corresponding to the element $\xi$ of $\g$ under the identification $\g\cong(\g^*)^*.$

\begin{example}
In the case where $\pi_G=0$, the dressing action of $G$ on its dual Poisson--Lie group $\g^*$ is simply the coadjoint action.
\end{example}

\begin{example}
In the case where $G$ is a semisimple complex group equipped with the standard Poisson--Lie group structure defined in Example \ref{standard}, the infinitesimal dressing action of $\g$ on the dual Poisson--Lie group $B\times_T\nB$ does not integrate to an action of $G$ on $B\times_T\nB$. However, under the isomorphism \eqref{diagonal} the dressing action of $G$ on $D/G_\Delta\cong G$ is simply the conjugation action.
\end{example}

%
%
%
%
%
%
%
\subsection{Moment maps for the dressing action}
\label{2.4}
Let $(M,\pi)$ be a Poisson manifold. Any Poisson map $\mu:(M,\pi)\too (G^*,\pi_{G^*})$ induces an infinitesimal action of $G$ on $M$ given by
\begin{align}
\label{induce}
\g&\too\mathfrak{X}(M) \\
\xi&\mtoo \pi^\#(\mu^*(\xi^L))\nonumber
\end{align}
where once again $\xi^L$ is the left-invariant $1$-form on $G^*$ generated by the element $\xi$ of $\g$. The map $\mu$ intertwines this action with the infinitesimal dressing action of $\g$ on $G^*$. If the infinitesimal action \eqref{induce} integrates, the resulting action of $G$ on $M$ is Poisson \cite[Theorem 4.8]{lu:90}, and $\mu$ is called its \emph{moment map} \cite[Definition 4.1]{lu:90}. Conversely, a Poisson action of $(G,\pi_G)$ on $M$ is called \emph{Hamiltonian} if it is induced by a Poisson map $\mu:(M,\pi)\too (G^*,\pi_{G^*})$ in this way.

\begin{example}
When $\pi_G=0$, Poisson actions of $G$ are exactly actions of $G$ by Poisson diffeomorphisms, and a moment map for such an action is precisely the usual notion of moment map valued in the dual Poisson--Lie group $\g^*$.
\end{example}

From now on suppose that $G$ is a semisimple complex group equipped with the standard Poisson--Lie group structure as in Example \ref{standard}. In this case, the non-integrability of the infinitesimal dressing action may prevent the induced action of $\g$ on $M$ from integrating. However, we can enlarge the class of moment maps we consider by extending their codomain along the map 
\begin{align}
\label{local}
\lambda:B\times_T\nB&\too  G\\
	(b_1,b_2)&\mtoo b_1b_2^{-1}, \nonumber
	\end{align}
allowing them to take values in $G$.

\begin{lemma}
\label{forms}
For each element $\xi$ of $\g$, there is a 1-form $\alpha_\xi\in\Omega^1(B\nB)$ such that $\lambda^*\alpha_\xi=\xi^L$.
\end{lemma}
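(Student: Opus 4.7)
The plan is to exhibit $\alpha_\xi$ by descending $\xi^L$ along the local diffeomorphism $\lambda$. As a preliminary step, I would determine the deck group of $\lambda:B\times_T\nB\too B\nB$. If $(b,\bar b)$ and $(b',\bar b')$ are two preimages of a common element $g\in B\nB$, then $b'^{-1}b=\bar b'^{-1}\bar b\in B\cap\nB=T$, and the requirement that the $T$-components of a point in $B\times_T\nB$ be inverse forces this common element of $T$ to have order $2$. Hence $\lambda$ is a finite Galois cover, with deck group equal to the diagonal embedding $\Delta T[2]=\{(s,s):s^2=e\}\subset G^*$ acting on $G^*$ by right translation.

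Because $\lambda$ is a local diffeomorphism, the existence of $\alpha_\xi\in\Omega^1(B\nB)$ with $\lambda^*\alpha_\xi=\xi^L$ is equivalent to invariance of $\xi^L$ under this deck action, that is, to the identity $R_{(s,s)}^*\xi^L=\xi^L$ for every $s\in T[2]$. Since $\xi^L$ is left-invariant, $R_{(s,s)}^*\xi^L$ is again left-invariant and is determined by its value at the identity, which is computed by applying the dual of the $G^*$-adjoint action of $(s,s)$ on $\g^*=\b\times_\t\nb$. Under the identification $(\g^*)^*\cong\g$ coming from the Manin pairing $\langle\cdot,\cdot\rangle_\d$, this dual becomes a specific endomorphism of $\g$. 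The second step is to compute this endomorphism explicitly and verify that it is the identity on $\xi$, exploiting the $\Ad$-invariance of $\langle\cdot,\cdot\rangle_\d$ and the fact that $\Ad_{(s,s)}(\xi,\xi)=(\Ad_s\xi,\Ad_s\xi)$ remains in $\g_\Delta$, so that the relevant projection back to $\g$ reduces to the action of $s$.

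The main obstacle is exactly this final invariance check: one must trace carefully how the Killing form on $\g$ interacts with the $\d$-pairing and with the embedding $\Delta T[2]\htoo G^*$ in order to conclude that $R_{(s,s)}^*\xi^L=\xi^L$. An alternative, and perhaps cleaner, route is to give $\alpha_\xi$ by an explicit formula on $B\nB$ in terms of the unique Bruhat factorization $g=u_+tu_-$, for instance as a Killing-form pairing of $\xi$ with a $1$-form built from $\Ad_{t^{1/2}}$-twisted pieces of the left-invariant Maurer--Cartan form on $G$. Plugging in local sections $(b,\bar b)=(u_+t^{1/2},u_-^{-1}t^{-1/2})$ of $\lambda$ and using the explicit tangent map $\lambda_*(X_1,X_2)=b(X_1-X_2)\bar b^{-1}$, one checks that the $t^{1/2}$-ambiguity cancels in the final expression, producing a well-defined $1$-form on $B\nB$ whose pullback agrees pointwise with $\xi^L$.
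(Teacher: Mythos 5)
Your first route is set up correctly: $\lambda$ is a covering of the big cell $B\nB$ whose deck group is the $2$-torsion subgroup $T[2]$ of $T$, embedded diagonally in $B\times_T\nB$ and acting by right translation, and since $\lambda$ is a local diffeomorphism the lemma is exactly equivalent to the invariance $R_{(s,s)}^{*}\xi^{L}=\xi^{L}$ for all $s\in T[2]$. But you stop at precisely that point, flagging the invariance check as ``the main obstacle'' without carrying it out---and that check is the entire content of the lemma, so what you have written is a reduction, not a proof. Worse, if one completes the computation you outline, the $\Ad$-invariance of the Manin pairing together with $\Ad_{(s,s)}(\xi,\xi)=(\Ad_{s}\xi,\Ad_{s}\xi)$ yields $R_{(s,s)}^{*}\xi^{L}=(\Ad_{s}\xi)^{L}$, so the condition you need is $\Ad_{s}\xi=\xi$ for every $s\in T[2]$. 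That holds for all $\xi\in\g$ only when $T[2]$ is central in $G$ (as for $SL_{2}$); already for $SL_{3}$, taking $s=\mathrm{diag}(-1,-1,1)$ and $\xi$ a root vector $e_{\alpha}$ with $\alpha(s)=-1$ gives $R_{(s,s)}^{*}\xi^{L}=-\xi^{L}$. So the step you defer is not merely unfinished; as you have set it up, it fails. Your alternative route founders on the identical point: the ambiguity in the choice of $t^{1/2}$ is exactly a torsor under $T[2]$, and the asserted cancellation in the final expression is the same invariance statement, again unproved.

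What does hold, and what the later arguments actually need (Definition \ref{defn} and the proof of Theorem \ref{main} only ever use $\alpha_{\xi}$ through the pointwise, hence local, condition $\rho_M(\xi)=\pi^{\#}(\mu^{*}\alpha_{\xi})$), is the local statement: on any open subset of $B\nB$ over which $\lambda$ admits a section $\sigma$, the form $\alpha_{\xi}\coloneqq\sigma^{*}\xi^{L}$ satisfies $\lambda^{*}\alpha_{\xi}=\xi^{L}$ there, simply because $\lambda$ is a local diffeomorphism. For comparison, the paper's own proof defines $\alpha_{\xi}$ at the identity by $x\mapsto\kappa(\xi,x)$ and extends it over $B\nB$ using the action $(b_{1},b_{2})\cdot g=b_{1}gb_{2}^{-1}$; for that extension to be well defined one needs the covector $\kappa(\xi,\cdot)$ to be invariant under the stabilizer of the identity, which is the same diagonal copy of $T[2]$ acting by $\Ad_{s}$---the identical condition. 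So your analysis isolates a genuine subtlety rather than missing an available shortcut; to make your write-up correct, either prove and use the local version of the statement, or restrict to those $\xi$ fixed by $\Ad_{T[2]}$.
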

\begin{proof}
The group $B\times_T\nB$ acts on itself by left multiplication and on $G$ by 
\[(b_1,b_2)\cdot g=b_1gb_2^{-1},\]
the map \eqref{local} is equivariant with respect to these actions. Since $B\nB$ is a single $B\times_T\nB$-orbit in $G$, we define $\alpha_\xi$ at the identity by
\begin{align*}
\alpha_\xi:\g&\too\mathbb{C}\\
		x&\too\kappa(\xi,x)
		\end{align*}
and then extend it to $B\nB$ using the $B\times_T\nB$-action. Then $\lambda^*\alpha_\xi=\xi^L$ at the identity, and therefore by invariance equality holds at all points of $B\nB$.
\end{proof}

\begin{definition}
\label{defn}
Suppose that $(M,\pi)$ is a Poisson manifold with a Poisson action of the complex semisimple Poisson--Lie group $(G,\pi_G)$, and let
\[\rho_M:\g\too T_M\]
be the associated infinitesimal action. A $G$-equivariant map $\mu:M\too G$ is a \emph{moment map} for this Poisson action if the equality
\[\rho_M(\xi)=\pi^\#(\mu^*\alpha_\xi)\]
holds along the dense open subset $M^\circ=\mu^{-1}(B\nB)$.
\end{definition}

The moment maps of Definition \ref{defn} are examples of a more general theory of \emph{$D/G$-valued moment maps} \cite{bur.cra:09}. In particular, since the restriction
\[\mu:M^\circ\too B\nB\]
is a Poisson map between the dense open sets $M^\circ$ and $B\nB$, it follows that $\mu:(M,\pi)\too(G,\pi_G)$ is also Poisson.

\begin{example}
The Heisenberg double $(D,\pi_D^+)$ has two Poisson actions of $(D,\pi_D^-)$, given by left- and right-multiplication. Since the image of the diagonal inclusion
\[(G,\pi_G)\htoo (D,\pi_D^-)\]
is a Poisson--Lie subgroup of $(D,\pi_D^-)$, we obtain two commuting Poisson actions of $(G,\pi_G)$ defined by
\begin{align*}
(G\times G)\times D &\too D\\
(g,h)\cdot (u,v)&\mtoo (guh^{-1}, hvg^{-1}).
\end{align*}
The moment map corresponding to this Poisson action in the sense of Definition \ref{defn} takes values in the space $G\times G$ equipped with the STS Poisson structure $\pi_0\times\pi_0$, and is given by
\begin{align*}
D&\too G\times G\\
(u,v)&\mtoo (uv, u^{-1}v^{-1}).
\end{align*}

We will often consider the reparametrization of $D$ defined by $(a,b)\mapsto(u,vu)$, which is a multiplicative analogue of the usual left-trivialization of the cotangent bundle of $G$. Under this change of coordinates, the $G\times G$-action becomes 
\begin{align*}
(G\times G)\times D &\too D\\
(g,h)\cdot (a,b)&\mtoo (gah^{-1}, hbh^{-1}),
\end{align*}
and the moment map is given by 
\begin{align*}
\mu:\,\,\,\,D\,\,\,\,&\too G\times G\\
(a,b)&\mtoo (aba^{-1}, b^{-1}).
\end{align*}
\end{example}

%
%
%
%
%
%
%
\subsection{Poisson reduction}
\label{2.5}
In this section we begin by recalling the general notion of Poisson reduction, and we refer to \cite{zam:11} for a more detailed overview. Let $(M, \pi)$ be a complex Poisson manifold with associated bracket
\[\{\cdot,\cdot\}:\O_M\times\O_M\too \O_M,\]
let $\imath:S\htoo M$ be a submanifold, and write
\[\I=\left\{f\in \O_M\mid f_{\vert S}=0\right\}\]
for its vanishing ideal. Then $\I$ is a Poisson ideal in $\O_M$ if and only if $S$ is a Poisson submanifold of $M$, and in this case the Poisson structure on $S$ is precisely the bracket $\{\cdot,\cdot\}_S$ induced on the quotient
\[\O_S=\O_M/\I.\]
It is uniquely characterized by the property that, for any $f,g\in \O_S$ and any extensions $F,G\in \O_M$ such that $f=\imath^*F$ and $g=\imath^*G,$
\[\{f,g\}_S=\imath^*\{F,G\}.\]

More generally, the ideal $\I$ is a Poisson subalgebra of $\O_M$ if and only if $S$ is a coisotropic submanifold---that is, if and only if
\[\pi^\#(T_S^\circ)\subset T_S.\]
The Poisson normalizer
\[N(\I)=\left\{h\in \O_M\mid \{h,\I\}\subset \I\right\}\]
consists of those functions in $O_M$ which are invariant along the distribution $\pi^\#(T^\circ_S)$. It is a Poisson subalgebra of $\O_M$, and $\I$ sits inside $N(\I)$ as a Poisson ideal. The quotient $N(\I)/\I$ is therefore a Poisson algebra. 

If the distribution $\pi^\#(T_S^\circ)$ is regular, it integrates to a regular foliation of $S$ and $N(\I)/\I$ is the subalgebra of $\O_S$ consisting of functions that are constant along the leaves of this foliation. The leaf space $Q\coloneqq S/\pi^\#(T_S^\circ)$ is therefore a Poisson manifold which sits in the diagram
\begin{equation*}
\begin{tikzcd}[row sep=large]
S\arrow[r, hook, "\imath"]\arrow[d, swap, "q"]	&M		\\
						Q.		
\end{tikzcd}
\end{equation*}
The associated Poisson bracket $\{\cdot,\cdot\}_Q$ is uniquely characterized by the property that, for any $f,g\in \O_Q$ and any lifts $F,G\in \O_M$ such that $q^*f=\imath^*F$ and $q^*g=\imath^*G,$
\[q^*\{f,g\}_Q=\imath^*\{F,G\}.\]
This construction of this Poisson structure is called the \emph{coisotropic reduction} of $M$ along $S$.

When the multiplicative ideal $\I$ is not a Poisson ideal, we can consider instead the intersection $N(\I)\cap\I$, which is a Poisson ideal of $N(\I)$. The resulting Poisson algebra 
\[N(\I)/(N(\I)\cap\I)\]
is precisely the subalgebra of $\O_S$ consisting of functions which admit extensions to $M$ that are invariant along $\pi^\#(T_S^\circ)$. This ring is therefore the candidate for a ``reduction'' of the Poisson structure of $M$ along the submanifold $S$.

When $\pi^\#(T_S^\circ)\cap T_S$ is a regular distribution, the quotient $Q\coloneqq S/(\pi^\#(T_S^\circ)\cap T_S)$ is a manifold and
\[\O_Q=\A/\I,\]
where
\[\A=\left\{h\in \O_M\mid \{h,N(\I)\cap\I\}\subset\I\right\}\]
is the subalgebra of $\O_M$ consisting of functions which are invariant along $\pi^\#(T^\circ_S)\cap T_S$. Since $\A$ is generally strictly larger than $N(\I)$, there is a priori only an inclusion
\[N(I)/(N(\I)\cap\I)\subset\A/\I\]
and the quotient $Q$ does not necessarily inherit a Poisson structure. However, equality holds under certain conditions, such as when the distribution $\pi^\#(T_S^\circ)+T_S$ has constant rank, in which case $S$ is called a \emph{pre-Poisson submanifold} \cite{cat.zam:07}.

Based on this framework, we prove a special case of Poisson reduction involving Poisson actions of Poisson--Lie groups, which we will apply in Section \ref{third}.

\begin{proposition}
\label{reduction}
Let $(M,\pi)$ be a Poisson manifold with Poisson bracket $\{\cdot,\cdot\}$ and equipped with a Poisson action of a Poisson--Lie group $H$. Suppose that $\imath:S\htoo M$ is an $H$-stable submanifold such that
\begin{itemize}
\item the quotient $Q\coloneqq S/H$ is a manifold with quotient map $q:S\too Q$,
\item the characteristic distribution
\[\pi^\#(T_S^\circ)\cap T_S\]
is tangent to the orbits of $H$, and
\item every $H$-invariant function on $S$ admits a $\pi^\#(T_S^\circ)$-invariant extension to $M$.
\end{itemize}
Then $Q$ inherits from $M$ a natural Poisson structure $\{\cdot,\cdot\}_Q$ which is uniquely characterized by the property that
\begin{equation}
\label{characterization}
q^*\{f,g\}_Q=\imath^*\{F,G\}
\end{equation}
for all $f,g\in \O_Q$ and all $\pi^\#(T_S^\circ)$-invariant lifts $F,G\in \O_M$ that satisfy $q^*f=\imath^*F$ and $q^*g=\imath^*G$.
\end{proposition}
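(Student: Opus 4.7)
The plan is to define $\{\cdot,\cdot\}_Q$ via \eqref{characterization} and verify that this prescription yields a well-defined Poisson bracket on $\O_Q$. Let $\I=\{F\in\O_M\mid F|_S=0\}$ be the vanishing ideal of $S$, so that by the discussion preceding the proposition its Poisson normalizer $N(\I)$ consists exactly of the $\pi^\#(T_S^\circ)$-invariant functions on $M$. For any $f\in\O_Q$, the pullback $q^*f$ is $H$-invariant on $S$, and hypothesis (3) produces a lift $F\in N(\I)$ with $\imath^*F=q^*f$. If $F_1,F_2$ are two such lifts, their difference lies in $N(\I)\cap\I$, which is a Poisson ideal of $N(\I)$; hence for any $G\in N(\I)$ one has $\{F_1-F_2,G\}\in\I$, so the right-hand side of \eqref{characterization} depends only on $f$ and $g$.

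The main step is to show that $\imath^*\{F,G\}$ is $H$-invariant on $S$, so that it descends along $q$ to a function $\{f,g\}_Q$ on $Q$. Let $\rho:\h\to\mathfrak{X}(M)$ denote the infinitesimal action and $\delta$ the cobracket of $\h$. Because the action is Poisson with respect to the nontrivial Poisson--Lie structure on $H$, one has for each $\xi\in\h$ the standard identity
\[
\rho(\xi)\{F,G\}-\{\rho(\xi)F,G\}-\{F,\rho(\xi)G\}=(\rho\wedge\rho)(\delta\xi)(dF,dG).
\]
Since $S$ is $H$-stable and the restrictions $F|_S=q^*f$ and $G|_S=q^*g$ are $H$-invariant, both $\rho(\xi)F$ and $\rho(\xi)G$ lie in $\I$; combined with $F,G\in N(\I)$, this forces the first two bracket terms on the right to lie in $\I$. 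The cobracket correction is a finite sum of products of the form $(\rho(\eta)F)\cdot(\rho(\zeta)G)$ with $\eta,\zeta\in\h$, each factor of which again lies in $\I$, so it too vanishes on $S$. Therefore $\rho(\xi)\{F,G\}|_S=0$, and the descended function $\{f,g\}_Q$ is well defined.

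With $\{\cdot,\cdot\}_Q$ thus defined, the Poisson axioms are verified by choosing suitable lifts. Skew-symmetry is immediate. For Leibniz, the product $FG$ lies in the subalgebra $N(\I)$ and serves as a lift of $(q^*f)(q^*g)$. For Jacobi, given three functions with lifts $F,G,K\in N(\I)$, the bracket $\{G,K\}$ belongs to $N(\I)$ since $N(\I)$ is a Poisson subalgebra, and it serves as a $\pi^\#(T_S^\circ)$-invariant lift of $q^*\{g,h\}_Q$; cyclic summation then reduces Jacobi on $Q$ to Jacobi on $M$. Uniqueness is immediate from \eqref{characterization}, since every $f\in\O_Q$ admits a lift by hypothesis (3). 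The main obstacle is the $H$-invariance step: it succeeds precisely because the Poisson--Lie correction term, although in general nonzero, is a sum of products of derivatives of $F$ and $G$ along $H$, each of which annihilates the $H$-invariant restrictions $F|_S$ and $G|_S$.
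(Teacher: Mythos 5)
Your proof is correct and follows essentially the same route as the paper: both arguments work with the vanishing ideal $\I$ and its Poisson normalizer $N(\I)$, use hypothesis (3) to produce lifts lying in $N(\I)$, and use the fact that $N(\I)\cap\I$ is a Poisson ideal of $N(\I)$ to get independence of the choice of lift. The one place where you go beyond the paper is the explicit verification that $\imath^*\{F,G\}$ is $H$-invariant: the paper compresses this into the assertion that $\left(N(\I)/(N(\I)\cap\I)\right)^H$ is a Poisson algebra, and your computation --- using $\mathcal{L}_{\rho(\xi)}\pi=(\rho\wedge\rho)(\delta\xi)$ together with the observation that $\rho(\xi)F,\rho(\xi)G\in\I$ kill both the bracket terms and the Poisson--Lie correction term --- is precisely the justification that assertion requires, and is where the Poisson (rather than Poisson-automorphism) nature of the action enters. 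Two minor points: the infinitesimal argument yields invariance only under the identity component of $H$ (harmless in the paper's application, where $H=U$ is connected, but worth flagging if $H$ is allowed to be disconnected), and in the Jacobi step the expression $q^*\{g,h\}_Q$ should read $q^*\{g,k\}_Q$ for the third function $k$ lifted by $K$.
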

\begin{proof}
Once again let $\I$ be the vanishing ideal of the submanifold $S$. The group $H$ preserves both $\pi^\#(T_S^\circ)$ and $T_S$, and therefore acts on the rings 
\[N(I)/(N(\I)\cap\I)\subset\A/\I\subset \O_M/\I=\O_S,\]
where $\A$ consists, as above, of $(\pi^\#(T_S^\circ)\cap T_S)$-invariant functions in $\O_M$. 

Taking invariants with respect to the $H$-action, we obtain
\[\left(N(I)/(N(\I)\cap\I)\right)^H=\left(\A/\I\right)^H=\left(\O_M/\I\right)^H=\O_S^H=\O_Q.\]
Here the first equality follows from the third assumption, and the second equality from the second. In particular, since the left-most side of this equality is a Poisson algebra, $\O_Q$ carries a reduced Poisson structure uniquely characterized by the property \eqref{characterization}.
\end{proof}

%
%
%
%
%
%
%
\section{Groupy Whittaker}
\label{third}
From now on we let $G$ be a complex semisimple algebraic group. In this section we study Poisson reduction at a class of smooth subvarieties in $G$ which are transversal slices to the conjugation action. We begin by recalling the construction of these slices in Section \ref{slices1}, and we compute their characteristic distributions in Section \ref{slices3}. In Section \ref{slices4} we prove the main reduction theorem.

\subsection{Multiplicative transversal slices}
\label{slices1}
Fix a maximal torus $T$ of $G$ and let $W=N_G(T)/T$ be the associated Weyl group. Any conjugacy class of elements in $W$ determines a slice in $G$ which is transverse to the orbits conjugation action. Such slices have been defined by several authors \cite{ste:65, lus.he:12, sev:11, dua:23}, and here we follows the conventions of \cite{dua:23}.

Fix a conjugacy class in $W$. For a particular choice \cite[Definition 4.3]{dua:23} of element $w$ in this conjugacy class, we define $U$ to be the unipotent subgroup generated by the roots not fixed by $w$,
\[Z\coloneqq \left\langle T^w, \,U_\alpha\mid w(\alpha)=\alpha\right\rangle,\qquad\text{and}\qquad U_w\coloneqq U\cap w\nuU w^{-1},\]
where for convenience we abuse notation to also denote by $w$ the normal representative of this Weyl group element in $N_G(T)$. Then $Z$ centralizes $w$, and $U_w$ is the unipotent subgroup consisting of positive roots that are flipped by $w^{-1}$. 

The slice associated to $w$ is the smooth subvariety
\[\Sigma\coloneqq U_wZw.\]
It is transverse to each conjugacy class of $G$ that it meets, and it is strictly transverse to the $U$-orbits on the enlarged space
\[\Omega\coloneqq UZw U=U_wZw U\]
---that is, the conjugation map
\begin{align}
\label{transverse2}
U\times \Sigma&\too U_wZw U\\
(u,h)&\mtoo uhu^{-1}\nonumber
\end{align}
is an isomorphism of varieties \cite[Theorem 1.2]{dua:23}.

\begin{remark}
Since the subgroup $Z$ may be disconnected in general, the slice $\Sigma$ is not necessarily connected. In the special case where $w$ is a Coxeter element, the subgroup $Z=T^w$ is a finite group and the identity component of the slice $\Sigma$ is precisely the Steinberg cross-section 
\[U_ww\]
introduced in \cite{ste:65}, which consists entirely of regular elements and is strictly transverse to each regular conjugacy class. This cross-section is a group counterpart of the Slodowy slice corresponding to a regular nilpotent element.
\end{remark}

\begin{remark}
Steinberg's original construction indicates that transversal slices to unipotent orbits in the group $G$ are linked to conjugacy classes in the Weyl group. This relationship, which motivates the results of \cite{lus.he:12} and \cite{dua:23}, is made explicit through the Lusztig map \cite{lus:11, lus:12a, lus:12b}, which we now briefly recall.

Given a conjugacy class $\C$ in $W$ and any element $w\in\C$ of minimal length, there is a unique minimal unipotent orbit $\O_{\C}$ with the property that the intersection 
\[BwB\cap\O_{\C}\]
is nonempty, and every unipotent orbit arises in this way \cite[Theorem 0.4]{lus:11}. The resulting \emph{Lusztig map} is the surjective assignment
\begin{align*}
\Phi:\{\text{conjugacy classes in $W$}\}&\too\{\text{unipotent orbits in $G$}\}\\
				\mathcal{C}&\mtoo\O_\C.
\end{align*}

This map is not injective in general, but becomes injective when it is restricted to the set of \emph{elliptic} conjugacy classes---conjugacy classes of Weyl group elements which fix no points in the maximal Cartan $\t$ \cite[Proposition 0.6]{lus:11}. He and Lusztig \cite {lus.he:12} showed in arbitrary characteristic that, whenever $\C$ is an elliptic conjugacy class and $w\in \C$ is a minimal length representative in good position, the space $\Sigma$ satisfies the transversality property \eqref{transverse2}. Around the same time, Sevostyanov \cite{sev:11} studied the slices $\Sigma$ for arbitrary choices of $w$ in characteristic $0$, and proved that they satisfy \eqref{transverse2} and that they are transversal to the conjugation action. 

Recently Duan \cite{dua:23} generalized these results to arbitrary characteristic and showed that, when $\C$ is the ``most elliptic'' class in the preimage $\Phi^{-1}(\O_{\C})$ of its associated unipotent orbit \cite[Theorem 0.2]{lus:12a}, \cite[Theorem 1.16]{lus:12b} and $w$ is an element of $\C$ which has a good position representative in the braid group of $G$, the slice $\Sigma$ is of complementary dimension to $\O_{\C}$ and therefore strictly transverse to it \cite[Theorem 1.2]{dua:23}. 

There are subtle differences between the approaches of He--Lusztig and Duan and that of Sevostya-nov regarding the choice of the element $w$. The relationship between these constructions has been explored in \cite{mal:21}. However, these subtleties do not play any role in our results, which rely only on the transversality property \eqref{transverse2} that is a feature of all the constructions mentioned above.
\end{remark}

%
%
%
%
%
%
%
\subsection{Characteristic distributions in $(G,\pi_0)$}
\label{slices3}
Let $\Phi$ be the root system of $\g$ with respect to the maximal Cartan $\t$, and let $\Phi^+$ be the set of positive roots determined by the Borel $\b$. There is a basis of $\g$ denoted by
\[\left\{h_1,\ldots,h_l, e_{\pm\alpha}\mid h_i\in\t, \alpha\in\Phi^+\right\},\]
which has the property that
\begin{itemize}[topsep=2.5pt, itemsep=2.5pt]
\item $2\kappa(h_i,h_j)=\delta_{ij}$ and 
\item $e_{\pm\alpha}$ is a root vector of weight $\pm\alpha$ that satisfies $\kappa(e_\alpha,e_{-\alpha})=1.$
\end{itemize}
With respect to this basis \cite[2.9]{eve.lu:07}, the Poisson structure $\pi_0$ is given by the bivector
\begin{equation}
\label{bivector}
\pi_0=\sum_{i}h_i^L\wedge h_i^R
		+\frac{1}{2}\sum_{\alpha\in\Phi^+}\left(e_\alpha^L\wedge e_{-\alpha}^L
		+e_{\alpha}^R\wedge e_{-\alpha}^R\right)
		+\sum_{\alpha\in\Phi^+}e_{-\alpha}^L\wedge e_{\alpha}^R.
\end{equation}
Let $\c$ be the orthogonal complement to $\t^w$ in $\t$ with respect to the Killing form. Then $\c$ is stable under the action of $w$ and contains no $w$-fixed points, so
\[\l=\z\oplus\c\qquad\text{and}\qquad\c\subset Z(\l),\]
where $\l$ is the Levi subgroup corresponding to the root subsystem $\{\alpha\in\Phi\mid w(\alpha)=\alpha\}$. The subalgebra $\c$ is integrated by a subtorus $C$ of the maximal torus $T$.

\begin{proposition}
\label{sharp}
The distribution $\pi^\#(T_\Omega^\circ)$ is tangent to the orbits of the subgroup $UC$.
\end{proposition}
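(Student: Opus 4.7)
The plan is to leverage the moment-map interpretation of the conjugation action established in Section \ref{2.3}: under the isomorphism \eqref{diagonal} the dressing action of $G$ on $D/G_\Delta\cong G$ becomes conjugation, so Definition \ref{defn} applied to the identity moment map on $(G,\pi_0)$ gives
\[\xi\cdot x - x\cdot\xi \;=\; \pi_0^\#(\alpha_\xi)|_x\]
for every $\xi\in\g$ and every $x\in B\nB$, where $\alpha_\xi$ is the $1$-form from Lemma \ref{forms}. Consequently the tangent space $T_x(UC\cdot x)$ at $x\in\Omega\cap B\nB$ coincides with $\pi_0^\#\bigl(\alpha_{\uu+\c}|_x\bigr)$, and the proposition reduces to the pointwise covector-level inclusion
\[T_x^\circ\Omega \;\subseteq\; \alpha_{\uu+\c}|_x + \ker\pi_0^\#|_x\]
on the dense locus $\Omega\cap B\nB$; the general claim then follows by continuity of the tangent distributions involved.

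To compute the left-hand side, I would parametrize $x=u_1 h w u_2$ with $u_i\in U$ and $h\in Z$, differentiate the multiplication map, and left-translate by $x^{-1}$ to the identity. Using that $Z$ centralizes $w$ and hence $\Ad(w^{-1}h^{-1})\z=\z$, this yields
\[x^{-1}T_x\Omega \;=\; \Ad(x^{-1})\uu + \Ad(u_2^{-1})\z + \uu \;\subset\; \g.\]
Taking Killing-form annihilators, using $\uu^\perp=\b$ together with the decomposition $\z^\perp = \c\oplus\bigoplus_{w\beta\neq\beta}\g_\beta$ (which follows from the orthogonal splitting $\t=\t^w\oplus\c$ and the orthogonality of root spaces attached to distinct roots), the conormal becomes
\[T_x^\circ\Omega \;=\; \b \,\cap\, \Ad(x^{-1})\b \,\cap\, \Ad(u_2^{-1})\z^\perp.\]
For the right-hand side, I would factor $x=b\bar u$ with $b\in B$ and $\bar u\in\nuU$; the $B\times_T\nB$-equivariance in the proof of Lemma \ref{forms} then gives $\alpha_\xi|_x = \Ad(\bar u^{-1})\xi$ in the same left-translated, Killing-identified trivialization, so $\alpha_{\uu+\c}|_x = \Ad(\bar u^{-1})(\uu+\c)$.

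Setting $\eta:=\Ad(\bar u)\bar\alpha$ for a conormal covector $\bar\alpha$, the constraint $\Ad(x)\bar\alpha\in\b$ rewrites as $\Ad(b)\eta\in\b$ and so $\eta\in\b$, while combining $\bar\alpha\in\b$ with $\Ad(u_2)\bar\alpha\in\z^\perp$ and projecting onto $\t$ forces the $\t$-component of $\bar\alpha$ to lie in $\c$. These constraints place $\eta$ in $\b$ with $\t$-part in $\c$; modulo $\ker\pi_0^\#|_x$, the remaining positive-root components of $\eta$ must be absorbed into $\uu$. I expect the main obstacle to be pinpointing the precise piece of $\ker\pi_0^\#|_x$ that absorbs the components of $\eta$ along $w$-fixed positive-root spaces, using the description of the symplectic leaves of $\pi_0$ as intersections of conjugacy classes with $(B\times_T\nB)$-orbits; the $w$-stability of $\z$ and the definition $Z=\langle T^w, U_\alpha : w\alpha=\alpha\rangle$ should play an essential role in this final matching.
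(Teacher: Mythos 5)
Your framework is internally coherent and the moment-map identity $\rho(\xi)=\pi_0^\#(\alpha_\xi)$ on $B\nB$ is a legitimate starting point, but the proof has a genuine gap at its center: the step you defer --- showing that the ``remaining positive-root components of $\eta$'' are absorbed by $\ker\pi_0^\#|_x$ --- is not a loose end but is \emph{exactly} the content of the proposition. The inclusion $T_x^\circ\Omega\subseteq\alpha_{\uu+\c}|_x+\ker\pi_0^\#|_x$ is equivalent, after applying $\pi_0^\#$, to $\pi_0^\#(T_x^\circ\Omega)\subseteq\rho(\uu+\c)|_x$, so the reformulation buys no leverage by itself, and $\ker\pi_0^\#|_x$ (the conormal to the symplectic leaf, i.e.\ to the intersection of a conjugacy class with a $B\times_T\nB$-orbit) is not described explicitly enough anywhere in your argument to carry out the absorption. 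The paper avoids this entirely: it writes $T_\Omega^\circ=\left((\uu+\z)^L+(\uu+\z)^R\right)^\circ=(\uu+\c)^L\cap(\uu+\c)^R$, takes an arbitrary $x=n_1^L+c_1^L=n_2^R+c_2^R$ in this conormal, and evaluates $\pi_0^\#(x)$ term by term against the explicit root-space expression \eqref{bivector} for $\pi_0$, landing directly on $\tfrac12(\rho(-c_1)+\rho(c_2))+\rho(-n_1)\in\rho(\uu+\c)$. That computation is pointwise valid on all of $\Omega$, needs no information about $\ker\pi_0^\#$, and needs no restriction to the big cell.

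Two further issues would need repair even if you completed the matching step. First, $\uu^\perp\neq\b$ in general: $U$ is generated only by the positive roots \emph{moved} by $w$, so $\uu^\perp=\b\oplus\nuu_Z$ where $\nuu_Z$ is spanned by the negative $w$-fixed root spaces; this changes your formula for $T_x^\circ\Omega$ and propagates into the final matching. Second, the passage from $\Omega\cap B\nB$ to all of $\Omega$ is not automatic: you must check that $\Omega\cap B\nB$ is dense in $\Omega$ (a Weyl-group representative $w\neq 1$ itself lies outside $B\nB$, so this is not vacuous), and the ``continuity of tangent distributions'' argument only closes if $\rho(\uu+\c)|_{\Omega}$ has constant rank --- the image of a bundle map is not a closed subset where its rank drops. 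The constancy does hold (freeness of the $U$-action on $\Omega$ plus the fixed-point-free action of $w$ on $\c$, essentially the argument of Corollary \ref{char}), but it must be said. In short: the skeleton is salvageable, but as written the proposal postpones the one computation that proves the statement, and the paper's direct evaluation of \eqref{bivector} on the conormal is both shorter and free of the density and rank-constancy caveats your route requires.
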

\begin{proof}
The action of $G$ on itself by conjugation induces an infinitesimal action map
\begin{align*}
\rho:\g&\too T_G\\
	\xi&\mtoo \xi^L-\xi^R.
	\end{align*}
The proposition therefore amounts to showing that $\pi_0^\#(T_\Omega^\circ)$ is contained in $\rho(\uu+\c).$

Using the Killing form to identify $T_G$ and $T_G^*$, we have
\[T_{\Omega}^\circ=\left((\uu+\z)^L+(\uu+\z)^R\right)^\circ=(\uu+\c)^L\cap(\uu+\c)^R.\]
If $x$ is an element of $T_{\Omega}^\circ$, using \eqref{bivector} and orthogonality we simplify each summand of $\pi_0^\#(x)$ to obtain
\begin{align*}
&\left(h_i^L\wedge h_i^R\right)^\#(x)=\kappa\left(h_i^L, x\right)h_i^R-\kappa\left(h_i^R, x\right)h_i^L\\
&\left(e_\alpha^L\wedge e_{-\alpha}^L\right)^\#(x)=-\kappa\left(e_{-\alpha}^L, x\right)e_{\alpha}^L\\
&\left(e_\alpha^R\wedge e_{-\alpha}^R\right)^\#(x)=-\kappa\left(e_{-\alpha}^R, x\right)e_{\alpha}^R\\
&\left(e_{-\alpha}^L\wedge e_{\alpha}^R\right)^\#(x)=\kappa\left(e_{-\alpha}^L, x\right)e_{\alpha}^R.
\end{align*}
Writing $x=n_1^L+c_1^L=n_2^R+c_2^R$ for some elements $n_1, n_2$ of $\uu$ and $c_1, c_2$ of $\c$, the first sum becomes
\begin{align*}
\sum\kappa\left(h_i^L, x\right)h_i^R-\kappa\left(h_i^R, x\right)h_i^L=\sum\kappa\left(h_i, c_1\right)h_i^R-\kappa\left(h_i, c_2\right)h_i^L=\frac{1}{2}c_1^R-\frac{1}{2}c_2^L
			\end{align*}
since $\t$ is orthogonal to $\uu$. The second summation gives
\[\sum\kappa\left(e_{-\alpha}^L, x\right)e_{\alpha}^L
		=\sum\kappa\left(e_{-\alpha}, n_1\right)e_{\alpha}^L
		=n_1^L\]
since $\kappa(e_\alpha,e_{\beta})=0$ when $\beta\neq-\alpha$. Similarly, the third summation simplifies to
\[\sum\kappa\left(e_{-\alpha}^R, x\right)e_{\alpha}^R
		=\sum\kappa\left(e_{-\alpha}, n_2\right)e_{\alpha}^R
		=n_2^R.\]
Finally, the third summation gives
\[\sum\kappa\left(e_{-\alpha}^L, x\right)e_{\alpha}^R
		=\sum\kappa\left(e_{-\alpha}, n_1\right)e_{\alpha}^R
		=n_1^R.\]
We obtain
\[\pi_0^\#(x)=\frac{1}{2}\left(c_1^R-c_2^L\right)-\frac{1}{2}(n_1^L+n_1^L+c_1^L-c_2^R)+n_1^R=\frac{1}{2}\left(\rho(-c_1)+\rho(c_2)\right)+\rho(-n_1),\]
completing the proof.
\end{proof}

\begin{corollary}
\label{char}
The characteristic distribution of the submanifold $\Omega$ is tangent to the orbits of the unipotent radical $U$.
\end{corollary}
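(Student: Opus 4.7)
The plan is to combine the formula from Proposition~\ref{sharp} with a pointwise analysis of $T_\Omega$. That proposition gives $\pi_0^\#(x) = \rho(-n_1)+\tfrac{1}{2}\rho(c_2-c_1)$ for any $x \in T_\Omega^\circ|_g$, and since $\rho(\uu) \subset T_\Omega$ (because $\Omega$ is closed under left and right multiplication by $U$), the corollary will reduce to showing that $\rho(c)|_g \in T_\Omega|_g$ for $c \in \c$ forces $c = 0$. Applied to $c_2 - c_1$ this yields $\pi_0^\#(x) = \rho(-n_1) \in \rho(\uu)|_g$, which is the desired conclusion.

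To verify the pointwise claim I would pass through Killing-form duality. Parametrize a vector $y \in T_\Omega^\circ|_g = (\uu+\c)^L|_g \cap (\uu+\c)^R|_g$ by $y = (n_y+c_y)^L|_g = (n_{gy}+c_{gy})^R|_g$ where $n_{gy}+c_{gy} = \Ad_g(n_y+c_y)$; a short computation using $\kappa(\c,\uu)=0$ then yields
\[\kappa(\rho(c), y) \,=\, \kappa(c,\, c_y - c_{gy}).\]
Thus $\rho(c)|_g \in T_\Omega|_g$ is equivalent to the vanishing of $\kappa(c,\, c_y-c_{gy})$ for all $y$, and it suffices to show that the set $\{c_y - c_{gy} : y \in T_\Omega^\circ|_g\}$ is all of $\c$, so that non-degeneracy of $\kappa$ on $\c$ forces $c=0$.

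To compute this range I would reduce to a point of $\Sigma$. The transversality isomorphism writes $g = usu^{-1}$ for a unique pair $(u,s) \in U \times \Sigma$, and hence $\Ad_g = \Ad_u\Ad_s\Ad_u^{-1}$. Because $\Ad_U$ preserves $\uu+\c$ and acts trivially on $\c$ modulo $\uu$ (since $\Ad_U\c\subset\c+\uu$), conjugation by $u$ identifies $T_\Omega^\circ|_g$ with $T_\Omega^\circ|_s$ while preserving $\c$-components; the set in question therefore coincides with its analogue at $s$. At $s = u_0 zw$, a direct computation using $\Ad_U\c\subset\c+\uu$, $\Ad_Z\c = \c$, and $\Ad_w\c = \c$ shows that the $\c$-component of $\Ad_s y$ equals $\Ad_w c_y$, so the set becomes $(I-\Ad_w)\c$. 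Since $w$ has no fixed points on $\c$ by construction, $I-\Ad_w$ is invertible there and this is all of $\c$.

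The main obstacle will be the bookkeeping required to check that $\Ad_u$ for $u \in U$ really preserves the $\c$-parts in the decompositions $y = n_y + c_y$ and $\Ad_g y = n_{gy}+c_{gy}$, so that the calculation at $s$ transports cleanly to $g$. Once this is under control, the remainder of the argument amounts to the invertibility of $I - \Ad_w$ on $\c$, which is essentially the defining property of $\c$ as the orthogonal complement of $\t^w$.
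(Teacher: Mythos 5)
Your reduction is exactly the paper's: combine Proposition~\ref{sharp} with $\rho(\uu)\subset T_\Omega$ so that everything hinges on showing $\rho(c)|_g\in T_\Omega|_g$ forces $c=0$ for $c\in\c$. Where you diverge is in verifying that last claim: the paper checks membership directly, writing $g=u_1zwu_2$ and observing that $c-\Ad_g(c)\in\uu+\z+\Ad_g(\uu)$ forces $c=\Ad_w(c)$, whereas you dualize, pairing $\rho(c)$ against the conormal space via the Killing form and showing the resulting linear functionals $c\mapsto\kappa(c,\,c_y-c_{gy})$ separate points of $\c$. Both arguments bottom out in the invertibility of $I-\Ad_w$ on $\c$, so the difference is one of packaging rather than substance; your version costs a little more bookkeeping (the conjugation-by-$u$ transport and the $\c$-component extraction) but makes the role of nondegeneracy of $\kappa|_\c$ explicit. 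One small imprecision: your claim that the $\c$-component of $\Ad_s y$ equals $\Ad_w c_y$ for \emph{every} $y\in T_\Omega^\circ|_s$ is not obviously true, since $\Ad_{u_0}$ applied to the root-space part $\Ad_z\Ad_w(n_y)$ can produce Cartan components through brackets of the form $[\g_\beta,\g_{-\beta}]$, whose coroots need not lie in $\t^w$. This is harmless: you only need the set $\{c_y-c_{gy}\}$ to \emph{contain} $(I-\Ad_w)\c=\c$, and that already follows from the vectors $y=c^L|_s$ with $c\in\c$ (which do lie in $T_\Omega^\circ|_s$, since $\Ad_s(c)\in\Ad_w(c)+\uu$), so you should state the containment rather than the equality.
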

\begin{proof}
Using Proposition \ref{sharp}, it suffices to show that
\[\rho(\uu+\c)\cap T_\Omega=\rho(\uu).\]
Suppose towards a contradiction that $c$ is an element of $\c$ with the property that $c^L-c^R\in T_\Omega$ at some point $g$ of $\Omega$. It follows that
\[c-\Ad_g(c)\in\uu+\z+\Ad_g(\uu)\]
and, simplifying the left-hand expression, we see that $c=\Ad_w(c).$ But $w$ has no nontrivial fixed points on $\c$, so $c=0$.
\end{proof}

%
%
%
%
%
%
%
\subsection{Reduction along the slice $\Sigma$}
\label{slices4}
Now let $(M,\pi)$ be a smooth Poisson algebraic variety with a Hamiltonian Poisson action of the Poisson--Lie group $(G,\pi_G)$. Write
\[\rho_M:\g\too T_M\]
for the corresponding infinitesimal action, and let
\[\mu:M\too G\]
be the associated moment map in the sense of Definition \ref{defn}. Since $\Sigma$ is transverse to the conjugacy classes it meets, so is $\Omega$. Then, since the moment map $\mu$ is $G$-equivariant, it follows that $\Sigma$ and $\Omega$ are transverse to $\mu$. In particular, their preimages are smooth subvarieties of $M$, and we obtain a commutative diagram
\begin{equation*}
\begin{tikzcd}[row sep=huge]
\mu^{-1}(\Sigma)\arrow[r, hook]\arrow[rd,swap]	&\mu^{-1}(\Omega)\arrow[r, hook, "\jmath"]\arrow[d, "q"]	&M		\\
						&Q\coloneqq\mu^{-1}(\Omega)/U.					&		
\end{tikzcd}
\end{equation*}
The diagonal map is an isomorphism by the transversality property \eqref{transverse2}. In particular, the quotient $Q$ always exists.

\begin{theorem}
\label{main}
There is a natural Poisson bracket $\{\cdot,\cdot\}_Q$ on the quotient $Q$ which is uniquely characterized by the property that
\begin{equation}
\label{charac2}
q^*\{f,g\}_Q=\jmath^*\{F, G\}
\end{equation}
for all functions $f,g\in \O_Q$ and all $\rho_M(\c)$-invariant lifts $F,G\in\O_M$ that satisfy $q^*f=\jmath^*F$ and $q^*g=\jmath^*G$.
\end{theorem}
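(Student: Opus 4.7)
The plan is to apply Proposition~\ref{reduction} with $H = U$ and $S = \mu^{-1}(\Omega)$, and then to reinterpret the resulting characterization in terms of $\rho_M(\c)$-invariant lifts. The first hypothesis of that proposition is immediate: the conjugation isomorphism $U \times \Sigma \to \Omega$ from \eqref{transverse2}, combined with the $G$-equivariance of $\mu$, lifts to an isomorphism $U \times \mu^{-1}(\Sigma) \to \mu^{-1}(\Omega)$, so $Q = \mu^{-1}(\Omega)/U$ is naturally identified with $\mu^{-1}(\Sigma)$ as a smooth variety, and the existence of the quotient and the diagonal isomorphism in the diagram are established at once.

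For the second hypothesis, I would use that $\mu$ is transverse to $\Omega$ (so $T_S^\circ = \mu^* T_\Omega^\circ$) and that $\mu$ restricts to a Poisson map on $M^\circ = \mu^{-1}(B\nB)$. Together these yield $\mu_* \pi^\#(T_S^\circ) = \pi_0^\#(T_\Omega^\circ)$ on $M^\circ \cap S$, which by Proposition~\ref{sharp} is contained in $\rho_G(\uu + \c)$; combined with the $G$-equivariance identity $\mu_*\rho_M = \rho_G$, this forces $\pi^\#(T_S^\circ) \subset \rho_M(\uu + \c) + \ker \mu_*$ on $M^\circ \cap S$. Intersecting with $T_S$ and applying Corollary~\ref{char} then shows that $\pi^\#(T_S^\circ) \cap T_S$ is tangent to $U$-orbits on this dense open subset, and hence on all of $S$. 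For the third hypothesis, I would exhibit the required $\pi^\#(T_S^\circ)$-invariant extensions as $\rho_M(\c)$-invariant functions on $M$. The same kind of Lie-algebraic computation as in the proof of Corollary~\ref{char}, using that $\c$ is orthogonal to $\t^w$ and contains no $w$-fixed vectors, shows that $\mu^{-1}(\Sigma)$ is transverse to the $\rho_M(\c)$-action: a vector $\rho_M(c)|_m$ at $m \in \mu^{-1}(\Sigma)$ lies in $T_{\mu^{-1}(\Sigma)}$ only if $c - \Ad_{g^{-1}}(c) \in T_\Sigma|_g$, which forces $c = 0$. Every function on $\mu^{-1}(\Sigma) \cong Q$ therefore admits a $\c$-invariant extension to a neighborhood, and then globally to all of $M$; such an extension $F$ is $U$-invariant on $S$ and $\rho_M(\c)$-invariant on $M$, and, by the structure of $\pi^\#(T_S^\circ)$ found in the second step, it belongs to the Poisson normalizer $N(\I)$.

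With the three hypotheses in hand, Proposition~\ref{reduction} delivers a natural Poisson bracket $\{\cdot,\cdot\}_Q$ characterized by $\pi^\#(T_S^\circ)$-invariant lifts; since the $\rho_M(\c)$-invariant lifts constructed above are automatically of this form and exist for every function on $Q$, the characterization \eqref{charac2} follows. The hardest step will be the third one: globalizing the local $\c$-invariant extension in the algebraic category, and verifying carefully that the resulting function genuinely lies in $N(\I)$ at every point of $S$, rather than only on the dense locus where the $\ker \mu_*$ contribution to $\pi^\#(T_S^\circ)$ is trivial.
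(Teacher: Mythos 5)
Your overall strategy is the paper's: verify the three hypotheses of Proposition~\ref{reduction} for $H=U$ and $S=\mu^{-1}(\Omega)$. But there is a genuine gap in your verification of the second hypothesis, and it propagates into the third. You control $\pi^\#(T_S^\circ)$ only through the pushforward $\mu_*$: from ``$\mu$ is Poisson on $M^\circ$'' and equivariance you deduce $\pi^\#(T_S^\circ)\subset\rho_M(\uu+\c)+\ker\mu_*$, and you then try to dispose of the error term by intersecting with $T_S$. This does not work: since $0\in T_\Omega$, we have $\ker\mu_*\subset\mu_*^{-1}(T_\Omega)=T_S$, so intersecting with $T_S$ removes nothing from the $\ker\mu_*$ summand. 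Corollary~\ref{char} only forces the $\c$-component of $\xi$ to vanish; you are left with $\pi^\#(T_S^\circ)\cap T_S\subset\rho_M(\uu)+\ker\mu_*$, which is not the statement that the characteristic distribution is tangent to $U$-orbits. Likewise, in your third step a $\rho_M(\uu+\c)$-invariant extension need not lie in $N(\I)$ if $\pi^\#(T_S^\circ)$ has a nontrivial $\ker\mu_*$ component, so the normalizer membership you assert at the end is exactly what remains unproved.

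The missing ingredient is the moment map identity itself, which is strictly stronger than ``$\mu$ is Poisson and equivariant'': Definition~\ref{defn} gives the \emph{exact} equality of vector fields $\pi^\#(\mu^*\alpha_\xi)=\rho_M(\xi)$ on $M^\circ$, with no $\ker\mu_*$ ambiguity. To use it one first observes (as in the proof of Proposition~\ref{sharp}) that every covector in $T_\Omega^\circ$ is left- and right-invariant, hence $B\times_T\nB$-invariant, and is therefore of the form $\alpha_\xi$ for some $\xi\in\uu+\c$; the moment map condition then yields the on-the-nose containment $\pi^\#(T_S^\circ)\subset\rho_M(\uu+\c)$, after which your applications of Corollary~\ref{char} and your $\c$-invariant extensions go through. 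Separately, the globalization issue you flag as the hardest step is not actually needed: local $\rho_M(\c)$-invariant extensions suffice, because the characterization \eqref{charac2} of the bracket is a local statement.
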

\begin{proof}
It is sufficient to check that the hypotheses of Proposition \ref{reduction} are satisfied. First note that $U$ is a Poisson--Lie subgroup of $G$ and therefore the action of $U$ on $M$ is a Poisson action. 

Since $\Omega$ is transverse to $\mu$,
\[\pi^\#\left(T_{\mu^{-1}(\Omega)}^\circ\right)=\pi^\#\left(\mu^*T_{\Omega}^\circ\right).\]
Suppose that $v$ a local section of this distribution. Then there exists a local 1-form $\alpha$ in $T_\Omega^\circ$ with the property that 
\[v=\pi^\#\mu^*\alpha.\]
Since $\alpha$ is both left- and right-invariant as in the proof of Proposition \ref{sharp}, it is in particular invariant under the action of $B\times_T\nB$. Since the open dense Bruhat cell $B\nB$ is a single $B\times_T\nB$-orbit, this implies that, using the notation of Lemma \ref{forms}, there is some $\xi$ in the subalgebra $\uu+\c$ with the property that $\alpha=\alpha_\xi$. The moment map condition of Definition \ref{defn} then implies that
\[v=\pi^\#\mu^*(\alpha_\xi)=\rho_M(\xi).\]
We conclude that 
\begin{equation}
\label{prechar}
\pi^\#\left(T_{\mu^{-1}(\Omega)}^\circ\right)\subset\rho_M(\uu+\c).
\end{equation}

Moreover, if $v$ is contained in 
\[\pi^\#\left(T_{\mu^{-1}(\Omega)}^\circ\right)\cap T_{\mu^{-1}(\Omega)}=\pi^\#\left(\mu^*T_{\Omega}^\circ\right)\cap \mu_*^{-1}\left(T_\Omega\right),\]
then 
\[\mu_*v=\pi_0^\#(\alpha_\xi)\in T_\Omega\]
and therefore $\xi$ is an element of $\uu$ by Corollary \ref{char}. It follows that
\[\pi^\#\left(T_{\mu^{-1}(\Omega)}^\circ\right)\cap T_{\mu^{-1}(\Omega)}=\pi^\#\left(\mu^*T_{\Omega}^\circ\right)\cap \mu_*^{-1}\left(T_\Omega\right)\subset\rho_M(\uu),\]
so the second condition of Proposition \ref{reduction} is satisfied.

Since the action of $C$ on $\Omega$ is locally free, so is the action of $C$ on the preimage $\mu^{-1}(\Omega)$. This implies that every function on $\mu^{-1}(\Omega)$ locally extends to a function on $M$ which is constant along the distribution $\rho_M(\c)$. In particular, every $U$-invariant function on $\mu^{-1}(\Omega)$ extends locally to a $\rho_M(\uu+\c)$-invariant, and therefore by \eqref{prechar} to a function which is invariant with respect to the distribution
\[\pi^\#\left(T_{\mu^{-1}(\Omega)}^\circ\right).\]
Therefore the third condition of Proposition \ref{reduction} is also met, and the proposition then implies that the quotient $Q$ carries a Poisson structure satisfying the property \eqref{charac2}.
\end{proof}

%
%
%
%
%
%
%
\section{Symplectic leaves of the reduction}
\label{fourth}
In this section we recall some background on Dirac geometry, and we reformulate the main Theorem \ref{main} in the language of Dirac geometry in order to characterize the symplectic leaves of the reduced spaces. In Section \ref{dirac1} we survey the basics of Dirac structures, and for a more detailed exposition we refer to \cite{bur:13}. In Section \ref{dirac3} we discuss our results from the Dirac perspective and show that open dense subsets of the reduced spaces in Theorem \ref{main} can be viewed as Poisson--Dirac submanifolds.

\subsection{Recollections on Dirac geometry}
\label{dirac1}
The \emph{generalized cotangent bundle} of a manifold $M$ is the vector bundle sum $T_M\oplus T^*_M$, equipped with a nondegenerate inner product
\begin{equation}
\label{inner}
\langle(v,\alpha),(w,\beta)\rangle = \alpha(w)+\beta(v)
\end{equation}
and a bracket
\begin{equation}
\label{dorf}
\llbracket(v,\alpha),(w,\beta)\rrbracket =\left([v,w], \mathcal{L}_v\beta-\imath_wd\alpha\right).
\end{equation}
A \emph{Dirac structure} on $M$ is a smooth subbundle $L$ of the generalized cotangent bundle $T_M\oplus T^*_M$ which is 
\begin{itemize}[topsep=2.5pt, itemsep=2.5pt]
\item Langrangian with respect to the inner product \eqref{inner}, and
\item involutive with respect to the bracket \eqref{dorf}. 
\end{itemize}
A simple calculation shows that \eqref{dorf} restricts to a Lie bracket on the sections of $L$, giving $L$ the structure of a Lie algebroid over $M$ with anchor map defined by the first projection.

\begin{example}
If $\omega$ is a 2-form on $M$, its graph
\[L_\omega\coloneqq \left\{(v,\iota_v\omega)\in T_M\oplus T^*_M\mid v\in T_M\right\}\]
is a Lagrangian subbundle of $T_M\oplus T_M^*$ which is involutive if and only if $\omega$ is closed. In particular, the graph of any symplectic form induces a Dirac structure on $M$. Conversely, a Dirac structure $L$ is the graph of a $2$-form if and only if
\begin{equation}
\label{gr1}
L\cap(0\oplus T_M^*)=0,
\end{equation}
and it is the graph of a symplectic form if and only if 
\[L\cap(T_M\oplus 0)=0=L\cap(0\oplus T^*_M).\]
\end{example}

Involutivity under the bracket \eqref{dorf} implies that the image of the first projection $L\too T_M$ is an integrable generalized distribution on $M$, and therefore induces a possibly singular foliation $M=\sqcup\O.$ Any leaf $\imath:\O\htoo M$ of this foliation has an induced Dirac structure
\[\imath^!L\coloneqq\{(v,\imath^*\alpha)\in T_\O\oplus T_\O^*\mid (\imath_*v,\alpha)\in L\}\]
which satisfies condition \eqref{gr1} and is therefore the graph of a $2$-form $\omega_\O\in\Omega^2(\O)$. This form is closed and possibly degenerate, and is called the \emph{presymplectic form} of the leaf $\O$.

\begin{example}
\label{bivectors}
The graph
\[L_\pi\coloneqq \left\{(\pi^\#(\alpha),\alpha)\in T_M\oplus T^*_M\mid \alpha\in T^*_M\right\}\]
of any bivector $\pi$ on $M$ is a Lagrangian subbundle of $T_M\oplus T_M^*$ which is involutive if and only if $\pi$ is Poisson---in other words, if and only if the bracket corresponding to $\pi$ satisfies the Jacobi identity. In this case, the corresponding foliation of $M$ by presymplectic leaves is precisely the usual symplectic foliation induced by the Poisson structure. Conversely, a Dirac structure
\[L\subset T_M\oplus T^*_M\]
is the graph of a bivector only if its kernel
\[\ker L\coloneqq L\cap (T_M\oplus 0)\]
is trivial, and in this case involutivity implies that the corresponding bivector is automatically Poisson. 
\end{example}

Let $(M,L_M)$ and $(N,L_N)$ be Dirac manifolds and let $f:M\too N$ be a smooth map. The \emph{Dirac pullback} of $L_N$ is the Lagrangian distribution
\[f^!L_N\coloneqq\left\{(v,f^*\alpha)\in T_M\oplus T^*_M\mid (f_*v,\alpha)\in L_N\right\}.\]
If $f^!L_N$ is a smooth bundle, it defines a Dirac structure on $N$, and the map $f$ is called \emph{backward-Dirac} or \emph{b-Dirac} if 
\[f^!L_N=L_M.\]
Backward-Dirac maps generalize the pullback of differential forms, and symplectomorphisms, for instance, are always b-Dirac.

The \emph{Dirac pushforward} of $L_M$, if it is well-defined, is the Lagrangian distribution
\[f_!L_M\coloneqq\left\{(f_*v,\alpha)\in T_N\oplus T^*_N\mid (v,f^*\alpha)\in L_M\right\},\]
and $f$ is called \emph{forward-Dirac} or \emph{f-Dirac} if 
\[f_!L_M=L_N.\]
Dirac pushforwards generalize the pushforward of vector fields---Poisson maps, for example, are always f-Dirac.

Note that a b-Dirac map need not be f-Dirac in general, nor the other way around. However, a diffeomorphism is b-Dirac if and only if it is f-Dirac, and in this case it is called a \emph{Dirac diffeomorphism.}

%
%
%
%
%
%
%
\subsection{Poisson--Dirac submanifolds}
\label{dirac3}
Suppose that $(M,\pi)$ is a Poisson manifold and let $L_\pi$ be the associated Dirac structure on $M$. A \emph{Poisson--Dirac submanifold} of $M$ is a submanifold $\imath:X\htoo M$ on which the pullback Dirac structure
\[\imath^!L_\pi\]
is induced by a Poisson bivector as in Example \ref{bivectors}. If, moreover, $X$ intersects the symplectic leaves of $M$ cleanly, then the symplectic foliation induced by $\imath^!L_\pi$ is given by the intersections of $X$ with the symplectic foliation of $M$, and $X$ is called a \emph{clean Poisson--Dirac submanifold} \cite[Lemma 2.10]{pedrodavid}. Poisson submanifolds and Poisson transversals are both special classes of clean Poisson--Dirac submanifolds. 

Let $U_Z$ and $\nuU_Z$ be the positive and negative unipotent radicals of the reductive subgroup $Z$, and let $T_Z=T^w$ be its maximal torus. Then the open dense Bruhat cell of $Z$ is given by
\[Z^\circ\coloneqq U_ZT_Z\nuU_Z,\]
and we can define $U$-stable open dense subsets of our earlier slices by
\[\Sigma^\circ\coloneqq UZ^\circ w\qquad\text{and}\qquad\Omega^\circ\coloneqq UZ^\circ wU.\]
In particular, by \cite[Lemma 2.10]{eve.lu:07}, the open slice $\Sigma^\circ$ is contained in a single $B\times_T\nB$-orbit.

The open slices $\Sigma^\circ$ and $\Omega^\circ$ also satisfy the transversality condition \eqref{transverse2}, and are studied in the work of Duan \cite{dua:23}. Therefore, for any moment map $\mu:M\too G$ there is a commutative diagram
\begin{equation*}
\begin{tikzcd}[row sep=huge]
\mu^{-1}(\Sigma^\circ)\arrow[r, hook, "\imath"]\arrow[rd,swap, "\sim"]	&\mu^{-1}(\Omega)\arrow[r, hook, "\jmath"]\arrow[d, "q"]	&M		\\
						&Q^\circ\coloneqq\mu^{-1}(\Omega^\circ)/U.					&		
\end{tikzcd}
\end{equation*}
Theorem \ref{main} then implies that $Q^\circ$ is a Poisson submanifold of $Q$.

\begin{theorem}
\label{main2}
The preimage $\mu^{-1}(\Sigma^\circ)$ is a clean Poisson--Dirac submanifold of $M$.
\end{theorem}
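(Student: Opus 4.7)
The plan is to verify directly the two conditions defining a clean Poisson--Dirac submanifold: that the pullback Dirac structure $\imath^! L_\pi$ on $\mu^{-1}(\Sigma^\circ)$ is the graph of a bivector field, and that $\mu^{-1}(\Sigma^\circ)$ intersects the symplectic leaves of $M$ cleanly.

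First, I would apply Theorem~\ref{main} to the open slice pair $\Sigma^\circ \subset \Omega^\circ$ to equip $Q^\circ \cong \mu^{-1}(\Sigma^\circ)$ with a reduced Poisson bivector $\pi_{Q^\circ}$, and then argue that $\imath^! L_\pi$ is precisely the graph of $\pi_{Q^\circ}$ transported along the diagonal isomorphism. The crucial step is to establish the vanishing
\[
\pi^\#\bigl(T_{\mu^{-1}(\Sigma^\circ)}^\circ\bigr)\cap T_{\mu^{-1}(\Sigma^\circ)}=0,
\]
so that $\imath^! L_\pi$ has trivial kernel. Transversality of $\mu$ to $\Sigma^\circ$ identifies the first factor with $\pi^\#(\mu^*T_{\Sigma^\circ}^\circ)$, and the moment-map identity of Definition~\ref{defn} together with Lemma~\ref{forms} rewrites any element of this space as $\rho_M(\xi)$ for some $\xi$ in a subspace of $\uu+\c$, exactly as in the proof of Theorem~\ref{main}. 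The finer structure of $\Sigma^\circ$---namely, that it is a strict slice to the $U$-action inside its single $B\times_T\nB$-orbit and that $C$ acts on it locally freely---then forces $d\mu(\rho_M(\xi))=0$ and hence $\xi=0$. A parallel comparison of Hamiltonian vector fields of $\rho_M(\c)$-invariant lifts identifies the resulting bivector with $\pi_{Q^\circ}$ via the characterization \eqref{charac2}.

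For cleanness, I would use $G$-equivariance of $\mu$ together with the description of the symplectic leaves of $(G, \pi_0)$ from \cite[Proposition 2.9]{eve.lu:07}. Any symplectic leaf $\O\subset M$ has image under $\mu$ contained in a single symplectic leaf of $G$, so
\[
\O\cap\mu^{-1}(\Sigma^\circ)=(\mu|_{\O})^{-1}\bigl(\Sigma^\circ\cap\mu(\O)\bigr).
\]
Because $\Sigma^\circ$ lies in a single $B\times_T\nB$-orbit and is transverse to conjugacy classes, it meets each symplectic leaf of $G$ transversally; this transversality lifts through $\mu$ to show that $\O\cap\mu^{-1}(\Sigma^\circ)$ is a clean intersection in $M$. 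The induced symplectic form on each such intersection then matches the one from $\imath^! L_\pi$ by construction.

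The main obstacle is the trivial-kernel computation in the first step. Unlike the Slodowy setting, $\Sigma^\circ$ is only Poisson--Dirac and not a Poisson transversal inside $(G,\pi_0)$, so the direct transfer from \cite{fre.mar:17} used for Slodowy slices is unavailable. Instead one must exploit the finer geometry of $\Sigma^\circ$ inside its unique dressing orbit---in particular the combined transversality to the $U$- and $C$-actions---to strengthen the ``tangent to $U$-orbits'' conclusion of Corollary~\ref{char} into the genuine vanishing required here.
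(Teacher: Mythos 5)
Your overall strategy---verify directly that $\imath^!L_\pi$ has trivial kernel and then check cleanness---is not the paper's route, and the key step of your first part has a genuine gap. You propose to show
\[
\pi^\#\bigl(T_{\mu^{-1}(\Sigma^\circ)}^\circ\bigr)\cap T_{\mu^{-1}(\Sigma^\circ)}=0
\]
by writing elements of $\pi^\#(\mu^*T_{\Sigma^\circ}^\circ)$ as $\rho_M(\xi)$ with $\xi\in\uu+\c$, ``exactly as in the proof of Theorem \ref{main}.'' That argument does not transfer. In the proof of Theorem \ref{main} the crucial point is that $T_\Omega^\circ=(\uu+\c)^L\cap(\uu+\c)^R$, so every covector in it is simultaneously left- and right-invariant, hence $B\times_T\nB$-invariant on the Bruhat cell, hence equal to some $\alpha_\xi$ from Lemma \ref{forms}; only then does the moment-map identity of Definition \ref{defn} convert $\pi^\#(\mu^*\alpha_\xi)$ into $\rho_M(\xi)$. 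The annihilator $T_{\Sigma^\circ}^\circ$ is strictly larger than $T_\Omega^\circ$ (by the transversality \eqref{transverse2}, the extra covectors are dual to $\rho(\uu)$ along $\Sigma^\circ$), and those extra covectors have no such invariance, so the moment-map condition gives you no control whatsoever over $\pi^\#(\mu^*\alpha)$ for them. Your proposed vanishing is in fact true, but it is a \emph{consequence} of the theorem, not something you can compute by the Theorem \ref{main} method; as written, the step would fail.

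The paper avoids this computation entirely. It observes that the reduced Poisson structure on $Q^\circ$ from Theorem \ref{main} is the Dirac bundle $q_!\jmath^!L_\pi$, that $q$ is f-Dirac and $\imath$ is b-Dirac, and that the diagonal map $\mu^{-1}(\Sigma^\circ)\to Q^\circ$ is an isomorphism by \eqref{transverse2}; by \cite[Lemma 1.21]{bal:21} this diagonal is then a Dirac diffeomorphism, so $\imath^!\jmath^!L_\pi$ is the graph of a bivector because $q_!\jmath^!L_\pi$ is. In other words, the trivial-kernel statement is obtained by transporting the already-constructed Poisson structure on the quotient, not by analyzing $T_{\Sigma^\circ}^\circ$ directly. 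If you want to keep a direct approach you would need to supply the missing lemma that identifies the diagonal isomorphism as a Dirac diffeomorphism, or find another way to handle the non-invariant part of $T_{\Sigma^\circ}^\circ$. Your cleanness argument is essentially the paper's (transversality to conjugacy classes plus containment of $\Sigma^\circ$ in a single $B\times_T\nB$-orbit, then pulling back through $\mu$), though note that $\Sigma^\circ$ meets the symplectic leaves of $(G,\pi_0)$ \emph{cleanly}, not transversally as you assert---it is not a Poisson transversal, which is exactly why the dimension count for transversality fails.
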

\begin{proof}
In the language of Dirac structures, the Poisson structure on $Q^\circ$ defined in Theorem \ref{main} corresponds to the Dirac bundle
\[q_!\jmath^{!}L_\pi.\]
The map $q$ is therefore f-Dirac, the inclusion $\imath$ is b-Dirac, and the diagonal morphism is an isomorphism by \eqref{transverse2}. It follows from \cite[Lemma 1.21]{bal:21} that the diagonal morphism is a Dirac diffeomorphism. Therefore, since $q_!\jmath^{!}L_\pi$ is a Poisson structure, so is the pullback $\imath^{!}\jmath^{!}L_\pi$, and $\mu^{-1}(\Sigma^\circ)$ is a Poisson--Dirac submanifold of $M$. In particular, $\Sigma^\circ$ is a Poisson--Dirac submanifold of $G$.

Now we show that $\Sigma^\circ$ is clean. Let $\O_\Delta\subset G$ be a conjugacy class and let $\O$ be the unique $B\times_T\nB$-orbit containing $\Sigma^\circ$. Then
\[T_{\Sigma^\circ}\cap T_{\O_\Delta\cap\O}=T_{\Sigma^\circ}\cap T_{\O_\Delta}\cap T_{\O}=T_{\Sigma^\circ}\cap T_{\O_\Delta}=T_{\Sigma^\circ\cap\O_\Delta}=T_{\Sigma^\circ\cap\O_\Delta\cap \O},\]
where the first equality follows since $\O_\Delta$ and $\O$ intersect transversally, the second follows from the inclusion of $\Sigma^\circ$ into $\O$, and the third follows from the fact that $\Sigma^\circ$ is transverse to the conjugacy classes in $G$ \cite[Theorem 1.2]{dua:23}. Therefore $\Sigma^\circ$ intersects the symplectic leaves of $G$ cleanly.

Now let $S$ be a symplectic leaf in $M$. Then, since $\Sigma^\circ$ is transverse to $\mu$,
\begin{align*}
T_{\mu^{-1}(\Sigma^\circ)}\cap T_{S}=\mu_*^{-1}(T_{\Sigma^\circ})\cap \pi^\#(T_M)
				\subset \mu_*^{-1}&(T_{\Sigma^\circ}\cap \pi_0^\#(T_G))\\	
				&=\mu_*^{-1}(T_{\Sigma^\circ\cap \O_\Delta\cap \O})\\			
				&=T_{\mu^{-1}(\Sigma^\circ\cap \O_\Delta\cap \O)}		
				\subset T_{\mu^{-1}(\Sigma^\circ)\cap S}.
				\end{align*}
It follows that $\mu^{-1}(\Sigma^\circ)$ intersects the symplectic leaves of $M$ cleanly, and therefore it is a clean Poisson--Dirac submanifold of $M$.
\end{proof}

\begin{corollary}
\label{main2cor}
The symplectic leaves of $\mu^{-1}(\Omega^\circ)/U$ are the connected components of the reductions of the symplectic leaves of $M$.
\end{corollary}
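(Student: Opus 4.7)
The plan is to deduce the corollary directly from Theorem~\ref{main2} together with the Dirac diffeomorphism supplied by the commutative diagram immediately preceding it; the corollary is essentially its leaf-level translation.

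First I would invoke Theorem~\ref{main2}, which asserts that $\mu^{-1}(\Sigma^\circ)$ is a clean Poisson--Dirac submanifold of $M$. By the definition of cleanness recalled in Section~\ref{dirac3}, this means that the symplectic leaves of $\mu^{-1}(\Sigma^\circ)$, under the Dirac pullback of $L_\pi$, are exactly the connected components of the intersections $\mu^{-1}(\Sigma^\circ) \cap S$, as $S$ ranges over the symplectic leaves of $M$, each carrying the symplectic form obtained by restriction of $\omega_S$. This restriction coincides with the symplectic form on the reduction of the symplectic leaf $S$ along $\Sigma^\circ$: since $\Sigma^\circ$ is transverse to the conjugacy classes of $G$ and $\mu$ is $G$-equivariant, the restriction $\mu|_S$ is also transverse to $\Sigma^\circ$ and to $\Omega^\circ$, so the transversality isomorphism \eqref{transverse2} identifies $\mu^{-1}(\Sigma^\circ)\cap S$ with the natural reduction of $S$ sitting inside the reduced space $Q^\circ$.

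Second, I would transport this symplectic foliation across the diagonal map in the diagram preceding Theorem~\ref{main2}. The proof of that theorem establishes that the diagonal is a Dirac diffeomorphism via \cite[Lemma 1.21]{bal:21}, and Dirac diffeomorphisms preserve symplectic foliations leaf by leaf (both the underlying partitions and the presymplectic forms). Hence the connected components of $\mu^{-1}(\Sigma^\circ)\cap S$ are transported to the symplectic leaves of $Q^\circ=\mu^{-1}(\Omega^\circ)/U$, which is precisely the content of the corollary.

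The only subtle point, and the potential obstacle, is the interpretation of the phrase ``reduction of a symplectic leaf of $M$''. Because $G$ is a Poisson--Lie group with nonzero $\pi_G$, neither the $G$-action nor the $U$-action preserves the individual symplectic leaves of $M$, so the reduction of a leaf $S$ cannot be defined as a naive Marsden--Weinstein quotient of $S$ by $U$. The correct interpretation is the Dirac-geometric one: $S$ is a symplectic manifold that is met transversally by $\mu^{-1}(\Sigma^\circ)$, and its reduction is the Poisson--Dirac (in fact symplectic) submanifold $\mu^{-1}(\Sigma^\circ)\cap S$ equipped with the restricted form $\omega_S|_{\mu^{-1}(\Sigma^\circ)\cap S}$. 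With this understanding in place, the proof is a direct consequence of the cleanness statement already secured in Theorem~\ref{main2}.
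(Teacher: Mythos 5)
Your proposal is correct and follows essentially the same route as the paper: both arguments rest on the cleanness established in Theorem~\ref{main2} (so that the leaves of $\mu^{-1}(\Sigma^\circ)$ are the connected components of its intersections with the leaves of $M$) and then transport the foliation to $Q^\circ=\mu^{-1}(\Omega^\circ)/U$ through the diagonal Dirac diffeomorphism $q\circ\imath$. Your closing remark on what ``reduction of a leaf'' must mean --- namely $q(S_M\cap\mu^{-1}(\Omega^\circ))$ rather than a naive quotient of $S_M$ by $U$ --- matches the paper's implicit reading and is a worthwhile clarification.
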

\begin{proof}
Suppose that $S$ is a symplectic leaf of $\mu^{-1}(\Omega^\circ)/U$. Since $q\circ\imath$ is an isomorphism, the preimage 
\[S_\Sigma\coloneqq \imath^{-1}(q^{-1}(S))=q^{-1}(S)\cap\mu^{-1}(\Sigma)\]
is a symplectic leaf of $\mu^{-1}(\Sigma)$. Since $\mu^{-1}(\Sigma)$ is a clean Poisson--Dirac submanifold of $M$, by \cite[Lemma 1.2]{pedrodavid} there exists a leaf $S_M$ of $M$ such that $S_\Sigma$ is a connected component of
\[S_M\cap \mu^{-1}(\Sigma).\]
This implies that $S=q(S_\Sigma)$ is a connected component of 
\[q(S_M\cap \mu^{-1}(\Sigma))=q(S_M\cap \mu^{-1}(\Omega)),\]
completing the proof.
\end{proof}

%
%
%
%
%
%
%
\bibliographystyle{plain}
\bibliography{biblio}

\end{document}